\documentclass[final]{siamltex}
\usepackage{amssymb,latexsym,amsmath,graphics,setspace}
\usepackage{epsfig,epsf,amsfonts}
\usepackage{color,multicol,colordvi,graphicx,multirow}
\usepackage{amsfonts, epsfig}
\usepackage{hyperref}
\topmargin=-0.5in 
\textheight=9.0in \textwidth=6.6in
\oddsidemargin=0in
\pagestyle{plain}

\footskip=35pt
  
\baselineskip = 0.94
\normalbaselineskip
\newcommand{\fb}{f_{\textrm{\begin{tiny}B\end{tiny}}}}
\newtheorem{remark}{Remark}
\newcommand{\We}{\mathcal W_{\textrm{\begin{tiny}\textrm E\end{tiny}}}}
\newcommand{\Potential}{\mathcal{W}}
\newcommand{\wldg}{\Potential_{\textrm{\begin{tiny}{LdG}\end{tiny}}}}
\newcommand{\wbtw}{\Potential_{\textrm{\begin{tiny}{BTW}\end{tiny}}}}
\newcommand{\bx}{\boldsymbol{x}}
\newcommand{\bX}{\boldsymbol{X}}
\newcommand{\be}{\boldsymbol{e}}
\newcommand{\XX}{\boldsymbol{X}}
\newcommand{\tr}{{\textrm{tr}}}
\newcommand{\bvphi}{\boldsymbol{\varphi}}
\newcommand{\bn}{\boldsymbol{n}}

\newcommand{\wconv}{\rightharpoonup}
\newcommand{\adj}{\operatorname{adj}}

\begin{document}
\title{ Liquid crystal elastomers  and phase transitions in rod networks}
 \author{M.~Carme Calderer \thanks{School of Mathematics, University of Minnesota, 
206 Church Street S.E., Minneapolis, MN 55455, USA.({\tt calde14@.umn.edu}).}
\and
 Carlos  A.Garavito Garz{\'o}n\thanks{School of Mathematics, University of Minnesota, 
206 Church Street S.E., Minneapolis, MN 55455, USA.({\tt garav007@.umn.edu}).}
\and
Chong Luo  \thanks{School of Mathematics, University of Minnesota, 
206 Church Street S.E., Minneapolis, MN 55455, USA.
  ({\tt luochong@gmail.com}).}
}
\maketitle
\pagenumbering{arabic}

\begin{abstract}
In this article, we construct and analyze models of anisotropic crosslinked polymers employing tools from the  theory of  liquid crystal elastomers.  The anisotropy of these systems stems from the  presence of rigid-rod molecular units in the network. 
We study   minimization of the energy  for incompressible as well as compressible materials, combining methods of isotropic nonlinear elasticity with the theory of lyotropic liquid crystals. We apply our results to the study of phase transitions in networks of rigid rods, in order to model the behavior of actin filament systems found in the cytoskeleton.

\end{abstract}

\begin{keywords}
variational methods, energy minimization, liquid crystals, non-linear elasticity, anisotropy, phase change, networks, actin.
\end{keywords}

\begin{AMS}
70G75, 74G65, 76A15, 74B20, 74E10, 80A22 
\end{AMS}

%********************************* SECTION ***********************************

\section{Introduction}

Cytoskeletal networks consist of rigid, rod-like  actin protein units jointed by  flexible crosslinks,  presenting coupled orientational and deformation effects   analogous to liquid crystal elastomers.  The alignment properties of the rigid rods influence the mechanical response of the network to applied stress and deformation, affecting  functionality of the systems \cite{wagner-cytoskeletal2006}, \cite{Gardel2004}.  Parameters that characterize these networks include the aspect ratio of the rods and the average length of the crosslinks, with a large span of  parameter values found across in-vivo networks. For instance,   cytoskeletal networks of  red blood cells have very large linkers and small rod aspect ratio \cite{Richieri-Akeson1985}, \cite{Gamez2008}, whereas those of cells of  the  outer hair  of the ear have very large aspect ratio and  short linkers favoring well aligned nematic, in order to achieve an  efficient sound propagation \cite{JerryOHC1995}.

This article is motivated  by  the works on 
Montecarlo simulations of phase transitions in rigid rod fluids by Bates el al.  \cite{BATES} and  the later application to actin networks by Dalhaimer et al. \cite{dalhaimer2007crosslinked}.   
In these articles, the authors  discuss experimentally observed alignment states and their phases transitions as well as  predictions from  numerical experiments. 
 They report on  a wide range of anisotropic regimes,  including the  uncross-linked  fluid network, in the nematic as well as the isotropic state, and the crystal-glass states involving
elastomer microstructure.  A goal of our work  is to obtain a continuum model matching  predictions of  the molecular simulations and available experiments.

A nematic fluid consists of  interacting rod-like molecules that have the tendency to aling along preferred directions and  the ability to flow under applied forces.
Liquid crystal elastomers are  anisotropic nonlinear elastic
materials, with the source of anisotropy stemming from  elongated, rigid monomer side
groups, or from main chain rod-like elements. They  are elastic solids that may also present fluid regimes of {\cite{conti2002soft}}, (\cite{desimone-doltzman2000}, \cite{desimone-doltzman2002}, \cite{fried-sellers2006}, \cite{warner2007liquid}). 
The interaction between the rod units and the network is at the core of liquid crystal elastomer behavior. In {\it main chain } 
elastomers, the  connected rigid units are part of  the backbone chains of the system and in {\it side-chain} elastomers, the rod
 units are attached to the polymer backbone.
  In both cases, the backbone chains are crosslinked into a network. 
Models of anisotropic  polymer melts and their  non-Newtonian  behavior have received significant attention (\cite{Forest2004}, \cite{Rey2010}, \cite{Rey2007}).

Ordering in nematic fluids is affected by temperature in thermotropic liquid crystals and by rod concentration in lyotropic ones.  
At high temperature or low concentration, respectively, nematic fluids are found in the isotropic state, experiencing a transition 
to the nematic upon cooling the thermotropic liquid or increasing the rod concentration of the lyotropic \cite{longa1986}.  In  rod-like systems, such as actin fiber networks, the phase transition behavior is affected by the density of rods. 

We consider anisotropic systems such that the total energy is the sum of the Landau-de Gennes liquid crystal energy of the nematic and an anisotropic elastic 
stored energy function. 
%We analyze the  problem of energy minimization, for both, incompressible and compressible elastomers. 
This energy involves two sources of anisotropy expressed by symmetric second order tensors,  that associated with  the rigid units, 
represented by the  nematic order tensor $Q$, and that of the network described by the positive definite, step-length tensor $L$.  
The tensor $L$ encodes the shape of the network:  it is spherical for isotropic polymers and spheroidal for uniaxial nematic 
elastomers, and has eigenvalues $l_\|$ and $l_{\perp}$ (double). 
The quantity $r:= \frac{l_{\|}}{l_\perp}-1$ measures the degree of anisotropy of the network, with positive values corresponding to  
prolate systems and negative ones to the oblate shapes. In the prolate geometry, the eigenvector $\bn$ 
associated with  $l_{\|}$ is the director of the theory, giving the average direction of alignment of the rods and also 
the direction of shape elongation of the network. It is natural to assume that $L$ and $Q$ share eigenvectors. In particular we 
assume that they are linearly related,  so that  for $L$ prescribed, we take $Q$ as its traceless version, that is $Q= L- \frac{1}{3}\tr L I$ (\cite{warner2007liquid}, page 49).
The free energy may also carry information on the anisotropy $L_0$   imprinted in the network  at crosslinking the original polymer melt. 

The Landau-de Gennes free energy density is the sum of scalar quadratic terms of $\nabla Q$ and the bulk  scalar function $f(Q)$. In the de 
Gennes-Landau theory, $f$  is a polynomial function of the trace of powers of $Q$  and describes  the phase transition between the 
isotropic and the nematic  phases \cite{longa1986}.  However, the polynomial growth is not physically realistic since 
it is expected that an infinite energy should be required to reach limiting alignment configurations  \cite{calderer-liu2000}, \cite{ericksen1991liquid}. This turns out to be as well an essential element of our analysis. 
A cautionary note about notation: we will employ the  common symbol $f$  to denote the bulk nematic energy density in the different cases that we address. 

Denoting $F$ the deformation gradient, the elastic energy density proposed by
Blandon, Terentjev and Warner  is  $|L^{-\frac{1}{2}}FL_0^{\frac{1}{2}}|^2$. It is the analog of the Neo-Hookean energy of isotropic 
elasticity, and  also derived from Gaussian statistical mechanics. Taking into account the relevant role played by  the tensor 
$G:={(L^{-1}FF^TL_0)}^{\frac{1}{2}}$ in the trace form of the energy, and  motivated by the theory of existence of minimizers of 
isotropic nonlinear elasticity (\cite{ball1976convexity}), we consider  polyconvex stored energy density functions 
$w(\XX)=\hat w(G(\XX))$, $\XX\in\Omega$. That is, functions $\hat w$ such that there exists a convex function $\Psi$ of the invariants $\{I_i\}_{i=1,2,3}$ of $GG^T$ satisfying $\hat w(G)= \Psi(I_1, I_2, I_3)$. However, since $G$ is not a gradient, we must be able to recover the limiting deformation gradient $F^*$ from the minimizing sequences $\{G_k\}_{k\geq 1}$.  For this, it is necessary that the  minimizing sequences $\{L_k\}$ yield a nonsingular limit.  This is achieved,  by either appropriately regularizing the problem so that  the range of the eigenvalues of $Q$ in the  admissible  set is strictly greater than  $-\frac{1}{3}$, or by requiring the blow up of  $f(Q)$ at the minimum eigenvalue limit, that is, $f(Q)\to \infty $ as $\det(Q+\frac{1}{3}I)\to 0$.

In the case of compressible networks, we further assume that expansion and compression are coupled with order, so that the bulk free energy is now $f(Q, \det F)$. Following the analogous assumptions of isotropic elasticity, we  require that,  for each symmetric traceless tensor $Q$,  $f$  becomes unbounded as $\det F\to \{0, \infty\}$.  We argue that the coupling between  expansion and compression with nematic order is qualitatively analogous to that of lyotropic uniaxial nematic liquid crystals, as proposed by Kuzuu and Doi \cite{kuzuu-doi1983}. In this case, the bulk energy $f(s)$ is parametrized by the rod concentration of the nematic fluid. At low concentration, the isotropic minimum dominates, with  nematic becoming the  preferred phase as the concentration increases.  In the application to rigid rod networks of section 4,  two parameter rates emerge as very relevant:  $\chi=\frac{L_a}{L_x}$, where $L_a$ denotes the typical length of  a  cylindrical rod,  and $L_x$  that of a  cross-linker filament, and the aspect ratio   $A_a=\frac{L_a}{D_a}$ of the  rod where $D_a$ denotes a typical diameter. We assume that $f$ depends on  $s$ and the rigid rod density $\rho$,  and it is also parametrized by the ratio $\chi$. Specifically, following the denominations of {\it loose}, {\it semiloose} and {\it tight} for networks with small through large values of $\chi$, we assume that   $f$ evolves from a function with a single isotropic well for $\chi$ small (large linkers), to having a single nematic well for large $\chi$ (short linkers), presenting an intermediate double-well region.
We also assume that the liquid energy scales according to the aspect ratio of the rods,  resulting in larger nematic contribution with increasing aspect ratio.   Proposition 4.1 summarizes the results on phase transitions under three-dimensional expansion. In subsection 4.2.1, we construct a bulk free energy density with the previously described properties and present  results on numerical simulations  of the phase transition behavior under plane extensions, plots of  phase diagrams in the density-aspect ratio plane, and the graphs of the  equilibrium order parameter $s$ with respect to the rod density. In particular, we find oblate equilibrium states for small values of the aspect ratio, corresponding to disk-like molecules.

In addition to the trace models of liquid crystal elastomer energy studied by Terentjev and Warner (\cite{warner2007liquid} and references therein),  generalizations of these earlier forms have been proposed and studied by several authors (\cite{anderson-carlson-fried1999} and \cite{fried-sellers2003};   \cite{desimone-agostiniani2012}, \cite{cesana2008strain}, \cite{desimone-cesana2011} and \cite{desimone2009elastic}). In the first two references, the authors propose energies based on powers of the earlier trace form, including Ogden type energies,  and study their extensions to account for semisoft elasticity.  Articles by de Simone et al. also propose and study Ogden type energies.  Furthermore, the analysis of equilibrium states presented in \cite{desimone-agostiniani2012} applies to  elastomer energy density functions that  are not quasiconvex. (For instance, these are appropriate to model crystal-like phase transitions.)  Their methods of proof combine the construction of lower quasiconvex envelops, the rigidity  theorem \cite{FrieseckeJamesMuller2002} and tools from the theory of $\Gamma$-convergence. 
Our results apply to a more restrictive class of energy density functions, that is,  polyconvex functions with respect to  the anisotropic deformation tensor $G$.  Our  methods of proof  use tools of isotropic nonlinear elasticity, and as such, are  directly tailored to treating polyconvexity. Moreover,  this approach readily applies to modeling the  nonconvexity associated with  nematic liquid order in networks and the corresponding phase transitions, although it does not cover the more general type of transitions  linked to quasiconvexity. 

This article is organized as follows.   Section 2 is devoted to  modeling, which includes incompressible and compressible liquid crystal elastomers as well as a rod-fluid model. Section 3 is devoted to the minimization of the energy in the different cases. Section 4 presents a study of density dependent liquid crystal phase transitions, with figures corresponding to the phase transition diagram and the order properties with respect to mechanical extension of the system.  The conclusions are described in Section 5.  

Some of the results of section 3 follow from the Ph.D thesis dissertation by Chong Luo
 \cite{chongluo2010}.

%********************** END SECTION ******************************************

%******************* SECTION *************************************
\section{The  Landau-de Gennes  liquid crystal elastomer} 

Equilibrium states of nematic liquid crystal elastomers are characterized by the
gradient of deformation tensor $F$ together with the symmetric tensors $L$ and $Q$, 
  describing the shape of the network and the nematic order, respectively.   
  
   We let the open and bounded domain
$\Omega\subset {\mathbf R}^3$ denote the reference configuration of the elastomer. We  denote $L_0$
and $Q_0$, the network anisotropy and the nematic order, respectively. in the
reference configuration. For synthetic elastomers, these tensors model the cross-linking,
%In synthetic elastomers, these represent the anisotropy and order, respectively,  imprinted at cross-linking. 
in 
%the case of 
fiber networks, these
quantities  represent the anisotropy and order in a relevant state, for instance, the  stress free
state if one exists.
We denote the deformation map of the polymer and its gradient as 

\begin{eqnarray}
&&\bvphi: \Omega\longrightarrow \ \bar\Omega, \quad \bx=\bvphi(\XX),
 \label{deformation} \\ && F=\nabla\bvphi,
  \quad \det F>0.\nonumber 
\end{eqnarray}
In order to understand the relationship among them and how they enter in the energy, we start with a brief survey of these 
tensors at the molecular level, the following the treatment in \cite{warner2007liquid}. 

%=========================== Subsection ============================================
\subsection{Statistical mechanics of anisotropic polymers}
We now focus on the statistical treatment of single ideal chains.  Let us consider a freely jointed chain composed of $N$ segments of length $a$, and let $\mathbf R$ denote the end-to-end vector of a chain.  The chain follows a random walk with step length $a$. The average end-to-end distance is given by 
\begin{equation}
<|\mathbf R|^2>= N a^2=a l, \quad <R_iR_j>=\frac{1}{3}\delta_{ij}al, \,\, 1\leq i, j\leq 3,\label{average}
\end{equation}
where $l=Na$ is the arc length of the chain, and $<\cdot>$ denotes the ensemble average. 
The probability of a given chain conformation to have an end-to-end vector $\mathbf R$  is the Gaussian distribution
\begin{equation}
p_N(\mathbf R)= (\frac{3}{2\pi R_0^2})^{\frac{3}{2}} e^{-\frac{3|\mathbf R|^2}{2R_0^2}},
\end{equation}
characterized by its variance $R_0$. Moreover,  consistency with (\ref{average}) implies that $R_0=al$.  
The partition function,  $$ Z_N(\mathbf R)= p_N(\mathbf R)Z_N $$
gives the number of configurations with end-to-end vector $\mathbf R$, where $Z_N$ is the total number of chain configurations. So, the free energy of a single chain is 
\begin{equation}
\mathcal F= -k_BT\ln Z_n(\mathbf R)= k_BT(\frac{3|\mathbf R|^2}{2 R_0^2})+ C,
\end{equation}
where $C$ is constant. 

 Another measure of the spatial extension of a single chain is the radius of gyration $R_G$.  It is defined as the root mean square of the distance between each segment of the chain and the center of mass. 
In the case that the number of segments $N>>1$, 
$$<R_G^2>\approx \frac{1}{6}N a^2= \frac{1}{6}aL=\frac{1}{6}R_0.$$
So, in the average, the shape of a polymer chain at equilibrium is spherical with  radius $R_G$. 

The average shape of a liquid crystal polymer is that of an ellipsoid, with  step-length tensor $L$, so that the anisotropic analog of the average of end-to-end distance (\ref{average}) is now 
\begin{equation}
<R_iR_j>=\frac{1}{3}lL_{ij}. \label{average-anisotropic}
\end{equation}
Letting 
 $l_1, l_2$ and $l_3$ denote the ellipsoid semi-axes  along directions $\be_i$,
$i=1,2,3$, $|\be_i|=1$, $L$ admits the spectral representation
\begin{equation}
L=\Sigma_{i=1}^3 l_i \be_i\otimes\be_i. \label {L-biaxial}
\end{equation}
We take $l_1=l_2:=l_\perp$  to represent a uniaxial network  giving the
spheroidal representation for L, and denote $l_{\|}:=l_3$ and $\bn:=\be_3$, so that 
 \begin{equation}
  L =(l_{\|}-l_{\perp}) \bn\otimes\bn + l_{\perp}I. \label{step}
\end{equation} 
In the prolate  symmetry corresponding to  main chain polymers, $l_{\|}>l_{\perp}$ in which case the polymer backbone will stretch along the nematic director $\bn$. (The reverse inequality holds in the case of side-chain oblate elastomers).
%The tensor $L$ has eigenvalues $l_\|$, corresponding to the eigenvector $\bn$, and  $l_{\perp}$ (double) and such that.

 The Gaussian distribution of chain conformations generalized to the anisotropic case is
\begin{equation}
p_N(\mathbf R)=[(\frac{3}{2\pi l})^3\frac{1}{\det L}]^{\frac{1}{2}}e^{-\frac{3}{2l}(\mathbf R\cdot L^{-1}\mathbf R)}.
\end{equation}
%Although the step-lenght tensor $L$ is defined for a single polymer chain, the ideal chain  hypotheses of non-interaction between chains gives it a macroscopic validation.  From now on, we will use $L$ to denote the macroscopic step-length tensor, and with $l_{\|}$ and $l_\perp$ referring to the corresponding eigenvalues. 
As in the isotropic case, the affinity property of chain conformations  leads to the anisotropic version of the neo-Hookean energy in the form
\begin{eqnarray}
\wbtw= \mu(F\cdot L^{-1}F),
\end{eqnarray}
where $\mu$ denotes the shear modulus. 
An expression  that includes the shape at crosslinking encoded in the initial step-length tensor $L_0$ is 
\begin{eqnarray}
\wbtw= \mu \tr (L_0F^TL^{-1}F). \label{btw0}
\end{eqnarray}
In general, scalar functions of powers of the tensors $FL_0F^T$ and $F^TL^{-1}F$ are admissible.
  
Following   the property of  freely  joined rods, we assume that  $L$ and $Q$
have common eigenvectors and propose the constitutive relation
\begin{equation}
L=a_0(Q+\frac{1}{3} I), \label{QL}
\end{equation}
where $a_0=\tr L$ is constant.  The linear constitutive equation (\ref{QL}) is analogous to those proposed by  Terentjev and Warner \cite{warner2007liquid} and Eliot and Sellers \cite{fried-sellers2003} stating that, given a symmetric and traceless tensor $Q$ and a  constant $\beta>0$, there is a one  $\alpha$-parameter family of step-length tensors $L$ with $\tr L=\beta$,  and such that
\begin{equation}
L= \beta(\alpha Q+ \frac{1}{3}I).  \label{QL-1}
\end{equation}
The form (\ref{QL}) corresponds to taking $\alpha=1$ and $\beta=a_0$ in (\ref{QL-1}). 

In order to interpret this condition,  and following the approach in \cite{majumdar2010landau},  we  appeal to the spectral representation 
\begin{equation} Q=\sum_{i=1}^3\lambda_1\be_i\otimes\be_i, \quad   \lambda_1+\lambda_2+\lambda_3=0. \label{Q-spectral} \end{equation}
The eigenvalues of $Q$ satisfy $ -\frac{1}{3}\leq \lambda_i\leq \frac{2}{3}$, $i=1, 2, 3$. 
For a biaxial nematic, $Q$ admits the representation in terms of the order parameters $r$ and $s$,
\begin{equation}
Q=r (\be_1\otimes\be_1-\frac{1}{3}I)+ s(\be_2\otimes\be_2-\frac{1}{3}I),  \label{Q-rs}
\end{equation} 
where 
$$s= \lambda_1-\lambda_3= 2\lambda_1+\lambda_2, \quad r= \lambda_2-\lambda_3=\lambda_1+2\lambda_2.$$
The inequality constraints on $\lambda_i$ imply restrictions on $r $ and $s$. Specifically, admissible values of  $(r,s)$ belong to
 the interior of the triangle $\mathcal T$ determined by the edges $\partial \mathcal T$: $r+s=1, r-2s=1 $ and $s-2r=1$ (Figure \ref{fig:majumdar}). 
It is easy to check that $Q$ reaches its minimum eigenvalue $\lambda=-\frac{1}{3}$ on each edge of $\partial \mathcal T$.  Hence, %With this, we summarize the statement (\ref{invertibility1}) on the invertibility of $L$ as follows: 
\begin{equation}
\det L=0 \Leftrightarrow \det(Q+\frac{1}{3}I)=0  \Leftrightarrow (r,s)\in\partial\mathcal T.\label{invertibility}
\end{equation}

%-------------------------- Figure --------------------------------
 \begin{figure}[htbp]
 \centering
  \includegraphics[scale=0.4]{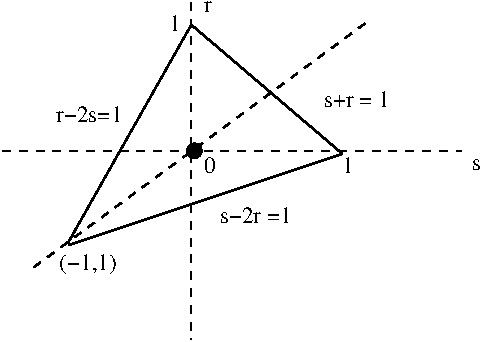}  
 \caption{$Q$ attains the minimum eigenvalue $\lambda=-\frac{1}{3}$ on each one of the edges of the triangle $\mathcal T$ \cite{majumdar2010landau}.}  \label{fig:majumdar}
 \end{figure}
%----------------- END FIGURE ---------------------------------------

We finally notice that the  uniaxial states correspond to the lines $r=0, s=0 $ and  $r=s$. In the latter case, the uniaxial order 
tensor representation is
%\end{document}
\begin{equation}
Q= -s(\bn\otimes\bn-\frac{1}{3}I), \,\, -s\in(-\frac{1}{2}, 1), \, |\bn|=1, \label{Q}
\end{equation} 
with $\bn:=\be_3$ representing the director of the theory.
Consistency with (\ref{L-biaxial}) yields
\begin{equation}
a_0r=(l_1-l_3), \quad a_0 s=(l_2-l_3).
\end{equation}
This reduces to the uniaxial nematic with director $\bn$ and order parameter $s$
in the case that 
$$l_2=l_1:=l_\perp, \,\, l_3:=l_\|.$$

We conclude this section  introducing the following notation.
Let ${\mathbb M}^3$ denote the space of three-dimensional tensors.   
\begin{eqnarray}
&& \mathbb M^3_{+}=\{M\in\mathbb M^3: \det M>0\}, \quad
\mathbb{S}^3= \{M\in\mathbb M^3: M=M^T\},\\
&& \mathbb{S}^3_{+}=\{M\in\mathcal S^3: \det M>0\}, \quad
\mathbb{S}^3_0=  \{Q\in\mathbb{S}^3: \, \textrm{tr}\,Q=0\}.
\end{eqnarray}
 The shape tensor $L\in\mathbb{S}^3_{+}$ and  $Q\in \mathbb S^3_0$.

%====================== END Subsection ======================================
%====================== Subsection ===========================================
\subsection{Energy functionals}
We now present the energy expressions for liquid crystals and their coupling with the anisotropic elastic energy. This motivates the liquid crystal elastomer energies  
analyzed in this article. 

%----------------------- Subsubsection -------------------------------------------------
\subsubsection{Uniaxial liquid crystal energy}
When using the uniaxial representation (\ref{Q}) on $Q$,  the liquid crystal
energy becomes that of  the model of Ericksen 
for liquid crystals with variable degree of orientation, 
\begin{equation}
\We= k(|\nabla s|^2 + s^2 |\nabla \bn|^2)+ \nu f(s), \label {ericksen}
\end{equation} 
where $k>0$ denotes the nematic elastic constant.  
 The bulk energy 
$f$ is assumed to be parametrized by the temperature, in thermotropic liquid
crystals, and by the rod concentration, in lyotropic ones \cite{ericksen1991liquid},   \cite{kuzuu-doi1983} and \cite{kuzuu-doi1984}. High values of the rod concentration favor the nematic state, whereas dilute systems tend to be isotropic. This motivates us to consider nematic fluids with coupling of order and rod density $\rho$ by allowing the bulk energy density to depend on $\rho$ as well, that is, taking $f(\rho, s)$, as in the study of actin networks presented in section 4.  

We point out that (\ref{ericksen}) follows from the   Landau-de Gennes energy of biaxial nematics when setting the biaxial order parameter $r=0$ in (\ref{Q-rs}).

%----------------------- END subsubsection ----------------------
%---------------------- subsection -----------------------------------
\subsubsection{Landau-de Gennes liquid crystal energy}  In its original form, the Landau-de Gennes energy density is given by 
 \begin{equation}
\wldg= \nu f(Q) + k|\nabla Q|^2 \label{dGL}
\end{equation} 
 where 
\begin{equation}
f=  a\, \tr (Q^2) -\frac{b}{3}\tr(Q^3)+ \frac{c}{4}(\tr Q^2)^2, \quad
a=\frac{\alpha}{2}(T-T_{NI}), \label{bulk-thermo}
\end{equation}
$a$, $b$, $c$  and $\alpha$ are positive, material dependent  constants, $T $
denotes absolute temperature, and $T_{NI}$ 
is the temperature of transition between the isotropic and nematic phases.   The parameterization of $f$ by $T$ has the effect of changing the relative depth of the potential wells, with the nematic minimum prevailing at low temperatures whereas the isotropic one has lowest energy at high temperature. 

The polynomial form of the bulk energy $f(Q)$ poses physical and mathematical difficulties, so instead we
assume that there exists a smooth function $\Phi: \mathcal T\to
\boldsymbol R+$ such that
\begin{eqnarray}
&&\,\,\,f(Q)= \Phi(s,r), \label{Phi1}\\
%&&\Phi\,\, \textrm{ has a global minimum at}\, s\in (0, 1), r=0, \label{Phi2}\\
&&\lim_{(s.r)\to \partial\mathcal T}\Phi(s,r)=+\infty. \label{Phi3}
\end{eqnarray}

%---------------------- END subsubsection ---------------------------------
%------------------------- subsubsection -----------------------------------
\subsubsection{Incompressible liquid crystal elastomer}
The total energy of a liquid crystal elastomer couples the  anisotropic elastic free energy  (\ref{btw0}) with the Landau-de Gennes liquid crystal expression is given by
 \begin{equation}
\mathcal E =
\int_{\Omega} \frac{\mu}{2}\big(|L^{-\frac{1}{2}}F L_0^{\frac{1}{2}}|^2 \big)+
\nu f(Q) + k|\nabla Q|^2\,d\XX, \label{bwt-total}
\end{equation}
with $F$ satisfying $\det F=1$. 
For an incompressible uniaxial  elastomer,  it becomes
\begin{equation}
\mathcal E=\int_{\Omega} \big(\wbtw(F, s,\bn, \bn_0)+ \We(s, \bn)\big)\,d\XX,
\label{total-energy-incomp}
\end{equation}
with
\begin{eqnarray}
\wbtw=&& \frac{1}{2} \mu
\frac{l_{\perp}^0}{l_{\perp}}\big((|F|^2-(1-r)|F^T\bn|^2)\nonumber\\
+ &&(\frac{1-r_0}{r_0})(|F\bn_0|^2-(1-r)(F^T\bn\cdot\bn_0)^2)\big),\nonumber\\
r=&& \frac{l_\perp}{l_\|}, \,\, r_0=  \frac{l_\perp^0}{l_\|^0}, \quad \alpha:=
1-r,\label{btw}
\end{eqnarray}
as in (\cite{warner2007liquid}), with $\bn_0, l_{\|}^0$ and $l_{\perp}^0$ representing rod alignment and polymer shape, respectively, at crosslinking. From (\ref{btw}), we observe that the configuration at crosslinking  is also the reference one.  

%-------------------------------- END subsubsection -------------------------------------
%--------------------------------- subsubsection --------------------------------------
\subsubsection{Compressible liquid crystal elastomer}
The energy of a compressible liquid crystal elastomer follows from  (\ref{bwt-total})  but now allowing deformations  such that $\det F>0$, without the constraint of the determinant being equal to 1.  Moreover, as in isotropic nonlinear elasticity,  we assume that $f$ is also a
function of $\det F$. That is, 
 \begin{equation}
\mathcal E =
\int_{\Omega} \frac{\mu}{2}\big(|L^{-\frac{1}{2}}F L_0^{\frac{1}{2}}|^2 \big)+
\nu f(Q, \det F) + k|\nabla Q|^2\,d\XX, \label{energy-compressible-total}
\end{equation}
where $\mu\geq 0, \nu\geq 0$ and $k\geq 0$ are prescribed material parameters. 
We  point out that the function $f$ also encodes the isotropic to nematic phase transition behavior, and, in particular, it accounts for the observed change
 in volume in such a transition.

%------------------------ END subsubsection --------------------------
%----------------------- subsubsection --------------------------
\subsubsection{Nonlinear anisotropic elastic energy}
We observe from the previous subsections that, due to the anisotropy,   the energy of the liquid crystal elastomer  depends on the deformation gradient $F$ through the combination 
\begin{equation}
G:= L^{-\frac{1}{2}}FL_0^{\frac{1}{2}}= (L^{-\frac{1}{2}}V)(RL_0^{\frac{1}{2}}):=
\tilde V\tilde R,   \label{G}\end{equation}
Note that, as result of the polar decomposition of $F=VR$, where $V$ is a symmetric positive definite tensor and $R$ is a proper rotation,  the deformation  expressed by $G$ can be viewed as a composition of the rotation of the elastic network in the reference configuration and an anisotropic stretch. 

Hence, we  consider stored energy functions of liquid crystal elastomer of the form
\begin{equation}
W(\bX)= W(L(\bX), L_0(\bX), F(\bX))= \hat W(G(\bX)). \label{hatW}
\end{equation}
This setting allows us to explore the analytic tools of isotropic elasticity, but with the significant difference that now, the tensor $G$ is not a gradient. 
 Let us consider the elastomer energy
\begin{equation}
\mathcal E= \int_{\Omega} \hat W(G) + |\nabla Q|^2+ f(Q)- \mathcal
L(\boldsymbol \bvphi), \label{dgl-elast}
\end{equation}
allowing the form $f(Q, \det F)$ in the compressible case. Note that the last term is a linear functional of the gradient map $\bvphi$ and   corresponds to subtracting an externally supplied mechanical energy. Since its treatment follows that of isotropic elasticity \cite{CI87}, from now on, we do not include it in the total energy. 

%------------------------ END subsubsection --------------------------------------
%********************************** Section *******************************************************
\section{Energy Minimization} 
We start with  making the following assumptions on $\hat W(G)$ motivated by the analogous ones in isotropic nonlinear elasticity 
\cite{ball1976convexity}.
\smallskip

\noindent 
{\it Polyconvexity}: there exists a convex function $\Psi:
\mathbb{M}^3_{+}\times \mathbb{M}^3_{+}\times (0,\infty) \longrightarrow
\mathbb{R}$ 
such that  $\hat W$ in 
(\ref{hatW}) satisfies
 \begin{equation}
\hat W(G)= \Psi(G, \adj G, \det G). \label{polyconvexity}
 \end{equation}
 
\noindent 
{\it Coerciveness}: There exist constants $\alpha, \beta, p , q, r$ such that 
\begin{eqnarray}
&& \alpha>0, \, p\geq 2, q\geq {\frac{p}{p-1}}, r>1, \nonumber\\
&& \hat W(G)\geq \alpha(|G|^p+ |\adj G|^q +(\det G)^r)+\beta, \nonumber \\
&&\textrm { for almost all } \XX\in \Omega \textrm{ and for all } F\in
\mathbb{M}^3_{+}.
\label{coerciveness}\end{eqnarray}
%for all $L, L_0\in \mathbb{S}^3_+$. 

\noindent
{\it Growth near zero-determinant}:
\begin{equation}
\lim_{\det G\to 0^+} \hat W(G)=+\infty. \label{detgrowth}
\end{equation}

\noindent
{\bf Remark. \,} We observe that the condition on the
exponents $p$ and $q$  of (\ref{coerciveness})  guarantees 
  $\frac{1}{p}+\frac{1}{q}<\frac{4}{3}$. This is a 
required condition to obtain convergence of  weak limits 
of sequences of determinants, in the  proof of existence of minimizer (Theorem \ref{determinant-limits}). 

%============================ Subsection ===========================================
\subsection{Auxiliary results}
We now present some auxiliary results needed in the proof of existence of minimizer. We would like to point out that, in our literature 
review we didn't found their corresponding proofs. 
\begin{proposition} \label{prop:FTLFest}
 Let  $L \in \mathbb{S}^3_{+}$. Then for any matrix $F
\in \mathbb{M}^3$
 \begin{equation}
  \tr (F^T L F) \geq l_{\min}(L) |F|^2, 
 \end{equation}
 where $l_{\min}(L)>0$ is the smallest eigenvalue of $L$.
\end{proposition}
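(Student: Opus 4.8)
The plan is to diagonalize $L$ and exploit the fact that $L$ is symmetric positive definite. Since $L \in \mathbb{S}^3_{+}$, by the spectral theorem there exists an orthogonal matrix $P$ such that $L = P D P^T$, where $D = \operatorname{diag}(l_1, l_2, l_3)$ with each $l_i > 0$, and $l_{\min}(L) = \min_i l_i$. The key observation is that $D - l_{\min}(L) I$ is diagonal with nonnegative entries, hence positive semidefinite.

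The main computation then runs as follows. First I would write $\tr(F^T L F) = \tr(F^T P D P^T F)$ and set $H := P^T F$, so that $\tr(F^T L F) = \tr(H^T D H) = \sum_{i,j} l_i H_{ij}^2$, using that $(H^T D H)_{jj} = \sum_i D_{ii} H_{ij}^2$. Since each $l_i \geq l_{\min}(L)$, I bound $\sum_{i,j} l_i H_{ij}^2 \geq l_{\min}(L) \sum_{i,j} H_{ij}^2 = l_{\min}(L)\,|H|^2$. Finally, because $P$ is orthogonal, $|H|^2 = |P^T F|^2 = \tr(F^T P P^T F) = \tr(F^T F) = |F|^2$, which gives the claimed inequality. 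Alternatively, and more compactly, one can argue without coordinates: $\tr(F^T L F) - l_{\min}(L)|F|^2 = \tr\big(F^T (L - l_{\min}(L) I) F\big)$, and since $L - l_{\min}(L) I$ is symmetric positive semidefinite it factors as $M^T M$ for some matrix $M$, whence $\tr(F^T M^T M F) = |MF|^2 \geq 0$.

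There is no real obstacle here; the only point requiring a word of care is the identity $\tr(F^T F) = |F|^2$ (the Frobenius norm convention) and the invariance of the Frobenius norm under multiplication by an orthogonal matrix, both of which are standard. I would also note explicitly that $l_{\min}(L) > 0$ follows from $L \in \mathbb{S}^3_{+}$ having strictly positive determinant together with positivity of all eigenvalues, so the stated inequality is genuinely a coercivity-type lower bound rather than a trivial one. The coordinate-free version is the cleanest to present, so I would write the proof in that form.
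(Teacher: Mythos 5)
Your proof is correct and follows essentially the same route as the paper's: diagonalize $L$ by the spectral theorem, use orthogonal invariance of the Frobenius norm, and bound each eigenvalue below by $l_{\min}(L)$ (your coordinate-free variant via $\tr\big(F^T(L - l_{\min}(L)I)F\big) = |MF|^2 \geq 0$ is just a cleaner packaging of the same observation). No gaps.
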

\begin{proof}
 Let us consider the spectral decomposition of  $L=S^T D S$, where $S$ is an orthogonal matrix and  $D=\mathrm{diag}(d_1, d_2, d_3)$ with $0 <
d_1 \leq d_2 \leq d_3$. 
Let us denote $A = SF (SF)^T$ and calculate
 \begin{eqnarray*}
  \tr (F^T L F) 
  &=& \tr (F^T S^T D S F) 
  = \tr (AD)   
  = \sum_{i=1}^3 d_i A_{ii} \\
  &\geq& d_1 \sum_i A_{ii} 
  = d_1 \tr (F F^T) 
 = l_{\min}(L) |F|^2,  
 \end{eqnarray*}
 where we have used the fact that $A_{ii} \geq 0,$ for $i=1,2,3$.
\end{proof}
\begin{proposition} \label{prop:detovernorm}
 Assume $L \in \mathbb{S}^3_{+}$, then we have
 \begin{equation}
  \frac{\mathrm{det}(L)}{|L|} \geq \frac{1}{\sqrt{3}} l_{\min}^2(L).
 \end{equation}
\end{proposition}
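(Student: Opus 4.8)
The plan is to reduce the inequality to a statement purely about the eigenvalues of $L$ and then verify it by elementary estimates. Since $L \in \mathbb{S}^3_{+}$, write its eigenvalues as $0 < d_1 \leq d_2 \leq d_3$, so that $l_{\min}(L) = d_1$. Both sides of the claimed inequality are invariant under orthogonal conjugation of $L$: the determinant, the Frobenius norm $|L| = \bigl(\sum_i d_i^2\bigr)^{1/2}$, and $l_{\min}(L)$ all depend only on the spectrum. Hence it suffices to prove
\begin{equation}
\frac{d_1 d_2 d_3}{\sqrt{d_1^2 + d_2^2 + d_3^2}} \geq \frac{1}{\sqrt{3}}\, d_1^2. \nonumber
\end{equation}

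First I would clear denominators: the inequality is equivalent to $\sqrt{3}\, d_1 d_2 d_3 \geq d_1^2 \sqrt{d_1^2 + d_2^2 + d_3^2}$, and since $d_1 > 0$ we may divide by $d_1$ to get $\sqrt{3}\, d_2 d_3 \geq d_1 \sqrt{d_1^2 + d_2^2 + d_3^2}$. Now use $d_1 \leq d_2$ and $d_1 \leq d_3$: on the right-hand side, $d_1 \sqrt{d_1^2 + d_2^2 + d_3^2} \leq d_1 \sqrt{d_2^2 + d_2^2 + d_3^2} $ is not quite what I want, so instead bound each summand under the square root by $\max\{d_2, d_3\}^2 = d_3^2$, giving $\sqrt{d_1^2 + d_2^2 + d_3^2} \leq \sqrt{3}\, d_3$. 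Therefore $d_1 \sqrt{d_1^2 + d_2^2 + d_3^2} \leq \sqrt{3}\, d_1 d_3 \leq \sqrt{3}\, d_2 d_3$, where the last step uses $d_1 \leq d_2$. This is exactly the desired inequality, completing the proof.

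There is no real obstacle here; the only thing to be careful about is the direction of the bounds (using the ordering $d_1 \leq d_2 \leq d_3$ consistently so that one estimate does not overshoot). An alternative, perhaps cleaner, route is to observe directly that $\det(L) = d_1 d_2 d_3 \geq d_1 \cdot d_1 \cdot d_1 \cdot \frac{d_2 d_3}{d_1^2}$ is awkward; the cleanest is simply $\det(L) = d_1 d_2 d_3 \geq d_1^2 \max\{d_2,d_3\} = d_1^2 d_3 \geq d_1^2 \frac{|L|}{\sqrt{3}}$, where the final inequality is just $|L| = \sqrt{d_1^2+d_2^2+d_3^2} \leq \sqrt{3}\, d_3$. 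This gives $\det(L)/|L| \geq d_1^2/\sqrt{3}$ in two lines. I would present this second version, since it avoids clearing denominators altogether, and it makes transparent that the key facts are $d_2 d_3 \geq d_1^2$ (wait — more simply $d_2 \geq d_1$ and $d_3 \geq d_1$ combined with dropping $d_2$) and the norm bound $|L| \leq \sqrt{3}\, l_{\max}(L)$.
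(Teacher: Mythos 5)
Your proof is correct and follows essentially the same route as the paper's: diagonalize, bound $|L| = \sqrt{l_1^2+l_2^2+l_3^2} \leq \sqrt{3}\, l_{\max}(L)$, and use the ordering of the eigenvalues to drop $l_2 l_3$ down to $l_{\min}^2(L)$. The two-line version you prefer is exactly the paper's computation, just written with the norm bound applied before rather than after forming the quotient.
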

\begin{proof}
Denoting the eigenvalues of $L$, $0< l_1 \leq l_2 \leq l_3$,
we have
 \begin{eqnarray*}
  |L| 
  &=& \sqrt{\tr (L^T L)} \\
  &=& \sqrt{l_1^2 + l_2^2 + l_3^2}. 
 \end{eqnarray*}
 Hence
 \begin{eqnarray*}
  \frac{\mathrm{det}(L)}{|L|}
  &=& \frac{l_1 l_2 l_3 }{\sqrt{l_1^2 +l_2^2 +l_3^2}} 
  \geq \frac{l_1 l_2 l_3}{\sqrt{3} l_3} \\
  &=& \frac{1}{\sqrt{3}} l_1 l_2 
  \geq \frac{1}{\sqrt{3}} l_1^2 
  = \frac{1}{\sqrt{3}} l_{\min}^2(L).
 \end{eqnarray*}
\end{proof}
\begin{lemma} \label{lemma:G-adjG}
%Let $p\geq 2$. 
For given $L, L_0\in \mathbb{S}^3_{+}$,  let $G$ be as in (\ref{G}). Then the following inequalities hold, 
%there exits $C_i=C_i(L_0, L)>0, i=1,2, $ such that 
\begin{eqnarray}
&&|G|\geq C_1 |F|, \,\, \textrm {\, and} \label{coercivityG}\\
&& |\adj G|\geq C_2 |\adj F|,  \label{coercivityadjG}
\end{eqnarray}
where $C_1=\sqrt{\frac{l_{min}(L_0)}{l_{max}(L)}}$ and $C_2= \frac{1}{3} C_1^2.$
\end{lemma}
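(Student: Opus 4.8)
The plan is to derive both inequalities from the two auxiliary propositions already established, by writing the relevant traces of $G$ and $\adj G$ in a form amenable to Proposition~\ref{prop:FTLFest} and Proposition~\ref{prop:detovernorm}. For \eqref{coercivityG}, observe that $|G|^2 = \tr(G^T G) = \tr(L_0^{1/2} F^T L^{-1} F L_0^{1/2}) = \tr\bigl((F L_0^{1/2})^T L^{-1} (F L_0^{1/2})\bigr)$. Applying Proposition~\ref{prop:FTLFest} with the symmetric positive definite matrix $L^{-1}$ in place of $L$ and $F L_0^{1/2}$ in place of $F$ gives $|G|^2 \geq l_{\min}(L^{-1}) |F L_0^{1/2}|^2 = \frac{1}{l_{\max}(L)} |F L_0^{1/2}|^2$. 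Then $|F L_0^{1/2}|^2 = \tr(L_0^{1/2} F^T F L_0^{1/2}) = \tr(F^T F L_0) = \tr\bigl((L_0^{1/2})^T (F^T F) L_0^{1/2}\bigr)$; a second application of Proposition~\ref{prop:FTLFest} (now with $F^TF$ playing a role—actually more directly, writing it as $\tr((F L_0^{1/2})^T (F L_0^{1/2}))$ is circular, so instead use $\tr(F^T F L_0) \geq l_{\min}(L_0)\tr(F^T F) = l_{\min}(L_0)|F|^2$, which follows from Proposition~\ref{prop:FTLFest} applied with $L_0$ and matrix $F$). Combining, $|G|^2 \geq \frac{l_{\min}(L_0)}{l_{\max}(L)} |F|^2$, which is \eqref{coercivityG} with the stated $C_1$.

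For \eqref{coercivityadjG}, the key algebraic identity is $\adj G = \adj(L^{-1/2} F L_0^{1/2}) = \adj(L_0^{1/2})\,\adj(F)\,\adj(L^{-1/2}) = (\det L_0^{1/2}) L_0^{-1/2}\,\adj(F)\,(\det L^{-1/2}) L^{1/2}$, using $\adj(AB) = \adj(B)\adj(A)$ and $\adj(M) = (\det M) M^{-1}$ for invertible $M$. Thus $\adj G = \sqrt{\tfrac{\det L_0}{\det L}}\; L_0^{-1/2}\,\adj(F)\, L^{1/2}$. Now I would estimate $|L_0^{-1/2}\,\adj(F)\, L^{1/2}|$ from below. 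Write $H := \adj(F)\,L^{1/2}$ so the matrix is $L_0^{-1/2} H$; then $|L_0^{-1/2} H|^2 = \tr(H^T L_0^{-1} H) \geq l_{\min}(L_0^{-1})|H|^2 = \frac{1}{l_{\max}(L_0)}|H|^2$ by Proposition~\ref{prop:FTLFest}. Next, $|H|^2 = |\adj(F) L^{1/2}|^2 = \tr(L^{1/2}\adj(F)^T\adj(F) L^{1/2}) = \tr\bigl((L^{1/2})^T(\adj(F)^T\adj(F))L^{1/2}\bigr) \geq l_{\min}(L)|\adj F|^2$, again by Proposition~\ref{prop:FTLFest}. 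Combining with the prefactor and using $\det L_0 = l_{\min}(L_0^{1/2})^{?}$—more carefully, one writes $\det L_0 / \det L$ and bounds it below. This is where Proposition~\ref{prop:detovernorm} enters: to control $\sqrt{\det L_0/\det L}$ and the eigenvalue factors together and extract the clean constant $C_2 = \tfrac{1}{3}C_1^2$, I expect one needs $\det L \leq |L| l_{\max}(L) \cdot(\text{something})$ or the bound $\det L_0 \geq \frac{1}{\sqrt 3}l_{\min}^2(L_0)|L_0|$ rearranged; matching the exact form of the constant $C_2$ is the fiddly bookkeeping step.

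\textbf{Main obstacle.} The genuine content is routine once the $\adj$-of-a-product identity is in hand; the main nuisance will be assembling the scattered eigenvalue factors $l_{\min}(L_0)$, $l_{\max}(L)$, $l_{\min}(L)$, $l_{\max}(L_0)$ and the determinant ratio $\sqrt{\det L_0/\det L}$ into precisely $C_2 = \tfrac13 C_1^2 = \tfrac13 \tfrac{l_{\min}(L_0)}{l_{\max}(L)}$. I anticipate that Proposition~\ref{prop:detovernorm}, applied to both $L$ and $L_0$, is exactly the tool that converts the determinant ratio into eigenvalue ratios and produces the factor $\tfrac{1}{3}$ (as $\left(\tfrac{1}{\sqrt3}\right)^2$ after squaring a norm estimate, or from $\tfrac{1}{\sqrt 3}$ applied once on each side combined with $|L|\ge l_{\max}(L)$ type bounds). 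I would carry out the chain of inequalities keeping careful track of whether each estimate is on $|\cdot|$ or $|\cdot|^2$, take square roots at the end, and verify the constant collapses to $\tfrac13 C_1^2$; if a stray $\sqrt 3$ survives, that signals the need to invoke Proposition~\ref{prop:detovernorm} once more rather than the cruder bound $\det L \le |L|^3$.
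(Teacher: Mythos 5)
Your proof of \eqref{coercivityG} is exactly the paper's: two applications of Proposition~\ref{prop:FTLFest}, first with $L^{-1}$ and $FL_0^{1/2}$, then with $L_0$, giving $|G|^2\ge l_{\min}(L^{-1})\,l_{\min}(L_0)\,|F|^2$. For \eqref{coercivityadjG} you and the paper start from the same identity $\adj G=\det(L^{-1/2})\det(L_0^{1/2})\,L_0^{-1/2}\adj(F)\,L^{1/2}$, but then diverge. The paper rearranges it to $L_0^{1/2}\adj(G)\,L^{-1/2}=\det(L^{-1/2})\det(L_0^{1/2})\adj(F)$, applies submultiplicativity $|AB|\le|A||B|$ to peel off $|L_0^{1/2}|$ and $|L^{-1/2}|$, and then invokes Proposition~\ref{prop:detovernorm} twice --- once for $L^{-1/2}$ and once for $L_0^{1/2}$ --- which is precisely where the factor $\tfrac13=(\tfrac{1}{\sqrt3})^2$ in $C_2$ originates. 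You instead keep the factors attached to $\adj F$ and bound $|L_0^{-1/2}\adj(F)L^{1/2}|$ from below by two further applications of Proposition~\ref{prop:FTLFest}; this is a legitimate alternative and the bookkeeping you defer actually closes immediately and \emph{without} Proposition~\ref{prop:detovernorm}: writing $l_1\le l_2\le l_3$ for the eigenvalues of $L$ and $m_1\le m_2\le m_3$ for those of $L_0$, your assembled constant is $\sqrt{\tfrac{\det L_0}{\det L}\cdot\tfrac{l_{\min}(L)}{l_{\max}(L_0)}}=\sqrt{\tfrac{m_1m_2}{l_2l_3}}\ \ge\ \tfrac{m_1}{l_3}=C_1^2\ \ge\ \tfrac13 C_1^2=C_2$, since $m_2\ge m_1$ and $l_2\le l_3$. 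So your route proves the lemma with a \emph{sharper} constant ($C_1^2$ in place of $\tfrac13 C_1^2$), and your anticipation that Proposition~\ref{prop:detovernorm} is needed to produce the $\tfrac13$ is accurate only for the paper's submultiplicativity route, not for yours; the only thing missing from your write-up is the two-line eigenvalue comparison above.
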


\begin{proof}
 We have
\begin{equation}
 |G|^2 = \tr \left(L_0 F^T L^{-1} F \right).
\end{equation}
 Since $L_0 \in \mathbb{S}^3_{+}$, we let $K_0=\sqrt{L_0}$. 
 Applying Proposition \ref{prop:FTLFest}, we estimate
 \begin{eqnarray*}
  |G|^2 &=& \tr \left(K_0^T F^T L^{-1} F K_0 \right) \\
  &\geq& l_{\min}(L^{-1}) |F K_0|^2 
=l_{\min}(L^{-1}) \tr (F L_0 F^T) \\
  &\geq& l_{\min}(L^{-1}) l_{\min}(L_0) |F|^2 = \frac{l_{\min}(L_0)}{l_{\max}(L)} |F|^2.
 \end{eqnarray*}
This yields
 \begin{equation}
  |G| \geq \sqrt{\frac{l_{\min}(L_0)}{l_{\max}(L)}} |F|.
 \end{equation} To prove \ref{coercivityadjG}, we calculate

\begin{eqnarray*}
 \mathrm{adj}(G)
 &=& \mathrm{det}(G) G^{-1} \\
 &=& \mathrm{det}\left(L^{-1/2} F L_0^{1/2} \right) L_0^{-1/2} F^{-1} L^{1/2} \\
 &=& \mathrm{det}\left(L^{-1/2}\right) \mathrm{det}\left(L_0^{1/2}\right) 
L_0^{-1/2} \mathrm{adj}(F) L^{1/2}.
\end{eqnarray*}
So
\begin{equation}
 L_0^{1/2} \mathrm{adj}(G) L^{-1/2}
 = \mathrm{det}\left(L^{-1/2}\right) \mathrm{det}\left(L_0^{1/2}\right)
\mathrm{adj}(F).
\end{equation}
By the matrix  property $|A B| \leq |A| |B|, $  
we have 
\begin{equation} \label{eqn:adjG-est1}
 |L_0^{1/2}|\cdot |\mathrm{adj}(G)|\cdot |L^{-1/2}| 
 \geq \mathrm{det}\left(L^{-1/2}\right) \mathrm{det}\left(L_0^{1/2}\right)
|\mathrm{adj}(F)|.
\end{equation}
Using Proposition \ref{prop:detovernorm}, we have
\begin{eqnarray*}
 |\mathrm{adj}(G)| 
 &\geq& \frac{1}{\sqrt{3}} l_{\min}^2(L^{-1/2}) \frac{1}{\sqrt{3}}
 l_{\min}^2(L_0^{1/2}) |\mathrm{adj}(F)| \\
 &=& \frac{1}{3} \frac{l_{\min}(L_0)}{l_{\max}(L)}
|\mathrm{adj}(F)|.
\end{eqnarray*}
\end{proof}

The following theorem is a special case of  (\cite{ball1976convexity},
Theorem 6.2). 
We will  apply it in the proofs of the existence of minimizer of the total
energy as in the case of  isotropic elasticity.  
\begin{theorem}  \label{determinant-limits} Suppose that $\Omega\subset \mathbb{R}^n$ is open. 
\begin{itemize}
\item $n=3$:
If $ u_r\wconv u$ in $W^{1,p}(\Omega)$ and $\adj \nabla u_r\wconv \adj\nabla u$
in $L^q$ with $p>1$, $q>1$ and $\frac{1}{p}+\frac{1}{q}>\frac{4}{3}$, then $\det
\nabla u_r\longrightarrow \det \nabla u$ in $\mathcal D'(\Omega)$.  
\item $n=2$:
If  $u_r\wconv u$ in $W^{1,p}(\Omega)$, and $p>\frac{3}{2}$ then $\det \nabla
u_r\longrightarrow\det \nabla u$ in $\mathcal D'(\Omega)$. 
\end{itemize}
\end{theorem}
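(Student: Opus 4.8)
The plan is to exploit the fact that $\det\nabla u$ and the entries of $\adj\nabla u$ are \emph{null Lagrangians}: on smooth maps they are exact divergences, and this divergence structure is stable under weak convergence because in each of the resulting products one factor converges strongly (by Rellich--Kondrachov) while the other converges weakly in the conjugate Lebesgue space. This is exactly Ball's argument in \cite{ball1976convexity}; since the statement is quoted as a special case of his Theorem~6.2, I only lay out the steps.

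First I would record, for $u\in C^\infty(\Omega;\mathbb{R}^n)$, the pointwise identity
\[
  \det\nabla u \;=\; \sum_{i=1}^{n}\partial_{x_i}\!\bigl(u^1\,(\adj\nabla u)_{i1}\bigr),
\]
which follows from $(\nabla u)(\adj\nabla u)=(\det\nabla u)\,I$ and the Piola identity $\sum_i\partial_{x_i}(\adj\nabla u)_{i1}=0$; for $n=2$ it reads $\det\nabla u=\partial_{x_1}(u^1\,\partial_{x_2}u^2)-\partial_{x_2}(u^1\,\partial_{x_1}u^2)$. The next step is to extend this identity, understood in $\mathcal D'(\Omega)$, to every $u\in W^{1,p}(\Omega)$ with $\adj\nabla u\in L^q(\Omega)$ (respectively to every $u\in W^{1,p}$, $p>\tfrac32$, when $n=2$) by a mollification argument: for smooth approximants the identity is exact, and the products occurring in it converge in $L^1_{loc}$ to the corresponding products for $u$ because the Sobolev embedding $W^{1,p}\hookrightarrow L^{p^\ast}$, with $p^\ast=\tfrac{np}{n-p}$, combined with $\adj\nabla u\in L^q$, makes the exponents admissible, which is guaranteed by $\tfrac1p+\tfrac1q<\tfrac43$, i.e.\ $q'<p^\ast$, for $n=3$ (and automatically by $p>\tfrac32$ for $n=2$), in agreement with the Remark above. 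Granting this, for $\varphi\in C_c^\infty(\Omega)$ one integrates by parts to get
\[
  \int_\Omega\varphi\,\det\nabla u_r\,d\XX \;=\; -\sum_{i=1}^{n}\int_\Omega(\partial_{x_i}\varphi)\,u^1_r\,(\adj\nabla u_r)_{i1}\,d\XX ,
\]
and passes to the limit on the right: $(\adj\nabla u_r)_{i1}\wconv(\adj\nabla u)_{i1}$ weakly in $L^q$ by hypothesis, while $\{u_r\}$ bounded in $W^{1,p}$ together with $q'<p^\ast$ gives, by Rellich--Kondrachov, $u_r\to u$ strongly in $L^{q'}_{loc}$, hence $u^1_r\,\partial_{x_i}\varphi\to u^1\,\partial_{x_i}\varphi$ strongly in $L^{q'}$. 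The weak--strong pairing then yields convergence of each integral, so the right-hand side tends to $-\sum_i\int_\Omega(\partial_{x_i}\varphi)\,u^1\,(\adj\nabla u)_{i1}\,d\XX=\int_\Omega\varphi\,\det\nabla u\,d\XX$, that is, $\det\nabla u_r\to\det\nabla u$ in $\mathcal D'(\Omega)$. The case $n=2$ is the same, with $\adj\nabla u$ replaced by the signed entries of $\nabla u$ and the single requirement $p'<p^\ast$, which $p>\tfrac32$ comfortably implies.

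I expect the only genuinely delicate point to be the extension in the first step, i.e.\ checking that the distributional divergence above really coincides with the pointwise a.e.\ Jacobian when $u$ is merely weakly differentiable with $\adj\nabla u\in L^q$, so that no distributional defect survives the mollification limit. This is precisely where the interplay between $p$, $q$ and the Sobolev exponent $p^\ast$ enters, and where the condition $\tfrac1p+\tfrac1q<\tfrac43$ (equivalently $q'<p^\ast$) is indispensable; once it is in hand, the passage to the limit is a routine weak--strong argument.
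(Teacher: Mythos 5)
Your proof is correct and is exactly the argument behind Ball's Theorem~6.2, which the paper cites for this statement without reproducing a proof: the null-Lagrangian divergence identity $\det\nabla u=\sum_i\partial_{x_i}\bigl(u^1(\adj\nabla u)_{i1}\bigr)$ via the Piola identity, extended distributionally by mollification, followed by the weak--strong pairing using Rellich--Kondrachov under the exponent condition $q'<p^*$. Note also that you have rightly read the hypothesis as $\frac{1}{p}+\frac{1}{q}<\frac{4}{3}$, consistent with the Remark preceding the theorem; the inequality $\frac{1}{p}+\frac{1}{q}>\frac{4}{3}$ printed in the theorem statement is a typo.
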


%==================== END Subsection ======================
%============= Subsection ======================
\subsection{Incompressible  Landau-de Gennes Elastomer}
In this section, we prove existence of energy minimizer for two different forms of the liquid crystal bulk energy $f(Q)$.  
We first consider the case that $f(Q)$ is the standard Landau-de Gennes polynomial and in our second approach, we assume that the 
function $f(Q)$ is defined as in \ref{Phi1} and \ref{Phi3}.

%------------------------ subsubsection -----------------------------------------
\subsubsection{Restriction on the domain of $Q$}

We now include a new constraint on the elements $Q\in\mathbb S_0^3$ of the admissible set:   for a given $\varepsilon>0$,  $\lambda_{\min}(Q)
\geq -\frac{1}{3}+\varepsilon$. 

We note that even 
 the strict bound  $\lambda_{\min}(Q) > -\frac{1}{3}$  is not sufficient to ensure that the limit $Q^*$
  of  the minimizing sequences $\{Q_k\}$ satisfies the same strict lower bound so as to guarantee the invertibility of $L$ obtained from (\ref{QL}). 

\noindent
As in \cite{cesana2008strain}, we define the set
\begin{equation}
 \mathcal{Q}(a) = \{Q \in \mathbb{S}^3_0, \, \lambda_{\min}(Q) \geq a\}, 
\end{equation}
where $a$ is some real number, and define
$\mathcal{Q}_{\varepsilon}=\mathcal{Q}(-1/3+\varepsilon)$, 
where $0<\varepsilon \leq 1/3$ is an arbitrary constant.

\begin{proposition} \label{prop:Qa-convex}
 The set $\mathcal{Q}(a)$ is convex in $\mathbb S_0^{3}$.
\end{proposition}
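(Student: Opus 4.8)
The plan is to reduce the claim to the concavity of the map $Q\mapsto\lambda_{\min}(Q)$ on the space of symmetric matrices, via the Rayleigh quotient characterization of the smallest eigenvalue. Concretely, for $Q\in\mathbb{S}^3$ one has
\begin{equation}
\lambda_{\min}(Q)=\min_{|\bv|=1}\,(Q\bv\cdot\bv),
\end{equation}
the minimum being taken over unit vectors $\bv\in\mathbb{R}^3$. For each fixed $\bv$, the function $Q\mapsto Q\bv\cdot\bv$ is linear in $Q$; hence $\lambda_{\min}$, being a pointwise infimum of a family of linear (in particular concave) functions, is concave on $\mathbb{S}^3$. A superlevel set $\{\,\lambda_{\min}\geq a\,\}$ of a concave function is convex, and intersecting it with the linear subspace $\mathbb{S}^3_0$ (itself convex) preserves convexity, which is exactly the statement.

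In the write-up I would actually give the short self-contained version rather than invoke concavity as a black box. Take $Q_1,Q_2\in\mathcal{Q}(a)$ and $t\in[0,1]$, and set $Q=tQ_1+(1-t)Q_2$. First, $\tr Q=t\,\tr Q_1+(1-t)\tr Q_2=0$, so $Q\in\mathbb{S}^3_0$. Next, for any unit vector $\bv$,
\begin{equation}
Q\bv\cdot\bv=t\,(Q_1\bv\cdot\bv)+(1-t)\,(Q_2\bv\cdot\bv)\geq t\,\lambda_{\min}(Q_1)+(1-t)\,\lambda_{\min}(Q_2)\geq ta+(1-t)a=a,
\end{equation}
where the first inequality uses $Q_i\bv\cdot\bv\geq\lambda_{\min}(Q_i)$ for unit $\bv$. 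Taking the infimum over $|\bv|=1$ gives $\lambda_{\min}(Q)\geq a$, so $Q\in\mathcal{Q}(a)$, proving convexity.

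There is essentially no hard step here; the only point requiring a moment of care is the justification of the Rayleigh bound $Q_i\bv\cdot\bv\geq\lambda_{\min}(Q_i)$, which follows immediately by expanding $\bv$ in an orthonormal eigenbasis of the symmetric matrix $Q_i$. I would state this in one line. (If one prefers to avoid the variational characterization altogether, one can instead note that $\lambda_{\min}(Q)\geq a$ is equivalent to $Q-aI\succeq 0$, i.e. to membership in the positive semidefinite cone shifted by $aI$; since the PSD cone is convex and the shift is affine, $\mathcal{Q}(a)$ is an intersection of convex sets. Either route is a few lines.)
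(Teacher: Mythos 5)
Your argument is correct and is essentially the paper's own proof: both establish $\tr Q=0$ by linearity of the trace and then use the Rayleigh characterization $\lambda_{\min}(Q)=\min_{|\bv|=1}Q\bv\cdot\bv$ to bound the convex combination from below by $ta+(1-t)a=a$. The concavity framing and the PSD-cone remark are just reformulations of the same idea, so nothing further is needed.
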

\begin{proof}
 Take any two matrix $Q_1$ and $Q_2$ in $\mathcal{Q}(a)$, and let
 \begin{equation*}
  Q = \alpha_1 Q_1 + \alpha_2 Q_2,
 \end{equation*}
 where $\alpha_i \geq 0, i=1, 2$, and $\alpha_1+\alpha_2=1$.
 Since  $Q\in \mathbb{S}_0^3$,  we only need to show
that $\lambda_{\min}(Q) \geq a$.
 By Rayleigh's formula, we have
 \begin{eqnarray*}
  \lambda_{\min}(Q) 
  &=& \min_{|\bx|=1} \bx^T Q \bx 
  = \min_{|\bx|=1}\left( \sum_{i=1}^2 \alpha_i \bx^T Q_i \bx \right) \\
  &\geq& \sum_{i=1}^2 \alpha_i \min_{|\bx|=1} \bx^T Q_i \bx 
  = a
 \end{eqnarray*}
Hence $Q\in\mathcal{Q}(a)$  and so, the convexity of $\mathcal{Q}(a)$ follows. 
\end{proof}

 For any matrix $Q \in \mathbb{S}^3_0$, since $\tr (Q)=0$, 
  $\lambda_{\min}(Q) \leq 0$ and $\lambda_{\max}(Q) \geq 0$ hold. 
 The following proposition gives a bound of $\lambda_{\max}(Q)$ based 
 on $\lambda_{\min}(Q)$.
\begin{proposition} \label{prop:lammax-lammin}
Let  $Q\in \mathbb{S}^3_0$. Then
 \begin{equation} \label{eqn:lammax-lammin}
  \lambda_{\max}(Q) \leq -2 \lambda_{\min}(Q).
 \end{equation}
\end{proposition}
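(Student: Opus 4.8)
The statement to prove is that for any $Q \in \mathbb{S}^3_0$ we have $\lambda_{\max}(Q) \le -2\lambda_{\min}(Q)$. The plan is to exploit the single algebraic constraint available, namely the vanishing trace $\lambda_1 + \lambda_2 + \lambda_3 = 0$, together with the ordering of the eigenvalues. First I would fix notation: let the eigenvalues of $Q$ be ordered as $\lambda_{\min} = \mu_1 \le \mu_2 \le \mu_3 = \lambda_{\max}$, so that $\mu_1 + \mu_2 + \mu_3 = 0$.

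The key step is simply to isolate $\lambda_{\max}$ from the trace identity: $\lambda_{\max} = \mu_3 = -\mu_1 - \mu_2 = -\lambda_{\min} - \mu_2$. Since $\mu_2 \ge \mu_1 = \lambda_{\min}$, we get $-\mu_2 \le -\lambda_{\min}$, and therefore $\lambda_{\max} = -\lambda_{\min} - \mu_2 \le -\lambda_{\min} - \lambda_{\min} = -2\lambda_{\min}$. That is the whole argument; it is a two-line manipulation of the trace constraint plus the eigenvalue ordering.

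Honestly, there is no real obstacle here — the proposition is elementary and the only thing to be careful about is the direction of the inequality when multiplying or negating (recall $\lambda_{\min}(Q) \le 0$, so $-2\lambda_{\min}(Q) \ge 0 \ge$ anything negative, but the bound is sharp when $\mu_2 = \mu_1$, i.e. for oblate uniaxial tensors). If one prefers to avoid introducing the middle eigenvalue explicitly, an equivalent phrasing uses the Rayleigh characterization as in the preceding proof: for any unit vector $\bx$, $\bx^T Q \bx \ge \lambda_{\min}(Q)$, and summing over an orthonormal eigenbasis gives $0 = \tr Q \ge \lambda_{\max}(Q) + 2\lambda_{\min}(Q)$, which rearranges to the claim. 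Either route is a few lines; I would present the trace-identity version for brevity, and note the sharpness at uniaxial oblate states as a remark tying it back to the $r = s$ line discussed earlier in the paper.
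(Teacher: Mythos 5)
Your argument is correct and is essentially identical to the paper's: both isolate $\lambda_{\max}$ via the trace identity and use $\mu_2 \geq \lambda_{\min}$ (the paper writes it as $-\lambda_3 = \lambda_1 + \lambda_2 \geq 2\lambda_1$ and negates). Nothing further is needed.
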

\begin{proof}
 For any matrix $Q$ in $\mathbb{S}^3_0$,  let  its eigenvalues satisfy 
% \begin{equation}
$  \lambda_1 \leq \lambda_2 \leq \lambda_3.$
% \end{equation}
 Since $\tr (Q)=\lambda_1+\lambda_2+\lambda_3 =0$, we have
 \begin{eqnarray*}
  -\lambda_3 &=& \lambda_1 + \lambda_2 
  \geq 2 \lambda_1.
 \end{eqnarray*}
The conclusion follows by multiplying both sides of the previous inequality  by $-1$. % we get (\ref{eqn:lammax-lammin}).
\end{proof}

Now we turn to the questions of estimating eigenvalues of $L$ and $L_0$ given by (\ref{QL-1}) for $Q, Q_0\in\mathcal Q_\epsilon$.
Note that $L$ and $L_0$ are both symmetric and positive definite, so that the anisotropic deformation tensor $G$  in (\ref{G}) is well defined. 
By Proposition \ref{prop:lammax-lammin}, we have that
\begin{eqnarray*}
 \lambda_{\max}(Q) 
 &\leq& -2 \lambda_{\min}(Q) 
 \leq \frac{2}{3}-2\varepsilon.
\end{eqnarray*}
So, using the constitutive relation (\ref{QL-1}) gives
\begin{equation}
 \lambda_{\max}(L) \leq a_0(1-2\varepsilon) \leq a_0.
\end{equation}
Moreover,  since $\lambda_{\min}(Q_0) \geq -\frac{1}{3}+\varepsilon$, 
using equation (\ref{QL-1}) again yields
\begin{equation}
 \lambda_{\min} (L_0) \geq a_0 \varepsilon.
\end{equation}
Hence, from Lemma \ref{lemma:G-adjG}, we have that
\begin{eqnarray}
&& |G| \geq \sqrt{\varepsilon} |F|, \quad \textrm{ and} \label{eqn:est-of-G}\\
&& |\mathrm{adj}(G)| \geq \frac{1}{3} \varepsilon |\mathrm{adj}(F)|. \label{eqn:est-of-adjG}
\end{eqnarray}
We consider the problem of minimizing  (\ref{dgl-elast}) on the admissible set
\begin{eqnarray}
\mathcal A_{\varepsilon}=&&\{\boldsymbol \varphi\in W^{1,p}(\Omega,
\mathbb{R}^3),
\, Q\in W^{1,2}(\Omega,\mathcal{Q}_{\varepsilon}): \, \, \adj(\nabla\boldsymbol\varphi)\in L^q(\Omega, \mathbb M^3), \,\, \nonumber\\
&& \quad \det\nabla\bvphi=1, \bvphi=\hat{\bvphi} \textrm{ and } Q=\hat Q \textrm
{ on } \Gamma_0\}, \label {Admissible-general}
\end{eqnarray}
where $p$ and $q$ are as in (\ref{coerciveness}).
The following theorem proves existence of a global minimizer of the energy.
\begin{theorem} \label{thm:dgl-incomp}
Let $\Omega\in \mathbb{R}^3 $ be open and bounded, and with smooth boundary
$\partial\Omega$. Let $\Gamma_0, \Gamma\subset \partial\Omega$, 
with $\Gamma_0\cap\Gamma=\emptyset$.  Let $W:
\mathbb{S}^3_{+}\times\mathbb{S}^3_{+}\times
\mathbb{M}^3_{+}\longrightarrow \mathbb{R}$ be as in (\ref{hatW}) satisfying 
hypotheses (\ref{polyconvexity}) and  (\ref{coerciveness}).
Then, there exists at least one pair $(\bvphi^*, Q^*)\in \mathcal
A_{\varepsilon}$ such that 
\begin{equation} 
\mathcal E(\bvphi^*, Q^*)=\inf_{(\bvphi, Q)\in \mathcal A_{\varepsilon}}
\mathcal E(\bvphi, Q).
\end{equation}
\end{theorem}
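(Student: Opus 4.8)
The plan is to argue by the direct method of the calculus of variations, in the spirit of Ball's polyconvex existence theorem, with two extra ingredients: controlling the coupling tensor $Q$, which enters the elastic density through $G=L^{-1/2}FL_0^{1/2}$, and dealing with the fact, emphasized in the text, that $G$ is not a gradient. First I would verify that $m:=\inf_{\mathcal A_\varepsilon}\mathcal E$ is finite: by (\ref{coerciveness}) one has $\hat W(G)\ge\beta$, the term $|\nabla Q|^2$ is nonnegative, and for $Q\in\mathcal Q_\varepsilon$ the eigenvalues of $Q$ lie in the compact interval $[-\tfrac{1}{3}+\varepsilon,\tfrac{2}{3}-2\varepsilon]$ (by Proposition \ref{prop:lammax-lammin}), on which $f$ is continuous and hence bounded below; assuming $\mathcal A_\varepsilon$ contains a competitor of finite energy, $m\in\mathbb R$. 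Pick a minimizing sequence $(\bvphi_k,Q_k)\subset\mathcal A_\varepsilon$. Using the lower bounds (\ref{eqn:est-of-G})--(\ref{eqn:est-of-adjG}), coerciveness (\ref{coerciveness}) controls $\|\nabla\bvphi_k\|_{L^p}$ and $\|\adj\nabla\bvphi_k\|_{L^q}$; the Dirichlet condition on $\Gamma_0$ and a Poincar\'e inequality then bound $\|\bvphi_k\|_{W^{1,p}}$, while $\|\nabla Q_k\|_{L^2}$ is bounded and $Q_k$ takes values in a bounded set, so $\|Q_k\|_{W^{1,2}}$ is bounded.

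Passing to a subsequence, $\bvphi_k\wconv\bvphi^*$ in $W^{1,p}$, $\adj\nabla\bvphi_k\wconv H$ in $L^q$, and $Q_k\wconv Q^*$ in $W^{1,2}$. By Rellich's theorem $Q_k\to Q^*$ in $L^2$ and, along a further subsequence, a.e.; since $\mathcal Q_\varepsilon$ is convex (Proposition \ref{prop:Qa-convex}) and closed, $Q^*\in W^{1,2}(\Omega,\mathcal Q_\varepsilon)$, and by a.e. convergence together with the eigenvalue bounds one gets $L_k^{\pm1/2}:=(a_0(Q_k+\tfrac{1}{3}I))^{\pm1/2}\to L^{*\,\pm1/2}$ a.e. and boundedly in $L^\infty$, hence in every $L^s$ with $s<\infty$. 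Since $p\ge2$, the weak continuity of the $2\times2$ minors of a gradient forces $H=\adj\nabla\bvphi^*$; by Theorem \ref{determinant-limits}, $\det\nabla\bvphi_k\to\det\nabla\bvphi^*$ in $\mathcal D'(\Omega)$, and since $\det\nabla\bvphi_k\equiv1$ this gives $\det\nabla\bvphi^*=1$ a.e. Weak continuity of traces yields $\bvphi^*=\hat\bvphi$ and $Q^*=\hat Q$ on $\Gamma_0$, so $(\bvphi^*,Q^*)\in\mathcal A_\varepsilon$; moreover $L^*,L_0\in\mathbb S^3_+$, so $G^*=L^{*\,-1/2}(\nabla\bvphi^*)L_0^{1/2}$ is well defined.

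The hard part is lower semicontinuity of $\bvphi\mapsto\int_\Omega\hat W(G)\,d\XX$. Writing $\hat W(G)=\Psi(G,\adj G,\det G)$ with $\Psi$ convex, and using $\adj G=\det(L^{-1/2})\det(L_0^{1/2})\,L_0^{-1/2}(\adj F)L^{1/2}$ and $\det G=\det(L^{-1/2})\det(L_0^{1/2})\det F$, each of $G_k$, $\adj G_k$, $\det G_k$ is the product of the corresponding minor of $F_k=\nabla\bvphi_k$ (weakly convergent by the previous paragraph) with matrix- or scalar-valued functions of $Q_k$ (strongly convergent in every $L^s$, $s<\infty$); hence $(G_k,\adj G_k,\det G_k)\wconv(G^*,\adj G^*,\det G^*)$ in $L^1$ --- in fact $\det G_k=\det(L_k^{-1/2})\det(L_0^{1/2})\to\det G^*$ strongly. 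Since $\Psi$ is convex and, by (\ref{coerciveness}), $\hat W$ is bounded below, the classical lower semicontinuity theorem for convex integrands along $L^1$-weakly convergent sequences (the same result underpinning Ball's polyconvex existence theorem) gives $\liminf_k\int_\Omega\hat W(G_k)\ge\int_\Omega\hat W(G^*)$. The term $\int_\Omega|\nabla Q|^2$ is weakly lower semicontinuous on $W^{1,2}$ by convexity in $\nabla Q$, and $\int_\Omega f(Q_k)\to\int_\Omega f(Q^*)$ by dominated convergence (a.e. convergence of $Q_k$ and uniform boundedness of $f$ on $\mathcal Q_\varepsilon$). Summing, $\mathcal E(\bvphi^*,Q^*)\le\liminf_k\mathcal E(\bvphi_k,Q_k)=m$, so $(\bvphi^*,Q^*)$ is a minimizer.

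The main obstacle is precisely the one flagged before the statement: $G$ is not a gradient, so polyconvexity of $\hat W$ cannot be invoked directly for weak lower semicontinuity. The device that resolves it is to factor the minors of $G$ through the minors of $F$, absorbing the $Q$-dependence into coefficients that converge \emph{strongly}; this is legitimate only because the $\mathcal Q_\varepsilon$-restriction forces the eigenvalues of $L$ and $L_0$ to stay bounded away from $0$ and from $\infty$ (cf. (\ref{eqn:est-of-G})--(\ref{eqn:est-of-adjG})), which is also what guarantees $G^*$ is well defined. A secondary point is that the exponent constraints in (\ref{coerciveness}) are chosen exactly so that Theorem \ref{determinant-limits} applies to identify the distributional limit of the Jacobians.
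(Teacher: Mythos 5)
Your proposal is correct and follows essentially the same direct-method route as the paper's own proof: coercivity via Lemma \ref{lemma:G-adjG} and the Poincar\'e inequality, weak compactness of the minimizing sequence, convexity of $\mathcal Q_\varepsilon$ (Proposition \ref{prop:Qa-convex}) plus Mazur's theorem to keep $Q^*$ in the constraint set, Theorem \ref{determinant-limits} to get $\det\nabla\bvphi^*=1$, and polyconvexity for lower semicontinuity. The one place you go beyond the paper is the lower-semicontinuity step: the paper asserts in one line that it ``follows from the polyconvexity assumption on $\hat W(G)$,'' whereas you supply the argument that is actually required because $G$ is not a gradient --- writing $G_k$, $\adj G_k$, $\det G_k$ as products of the weakly convergent minors of $\nabla\bvphi_k$ with $L^\infty$-bounded, a.e.-convergent functions of $Q_k$, so that the triple converges weakly in $L^1$ and convexity of $\Psi$ applies; this is a necessary elaboration of the same idea rather than a different route.
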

\begin{proof}
First of all, we point out that the integrals in the definition of $\mathcal E$
are well defined. We observe as well that
$\mathcal A_{\varepsilon}\neq\emptyset$ and therefore, there exists a constant
$K_1>0$ such that  the following inequality holds:
\begin{equation}
\inf_{(\bvphi, Q)\in \mathcal A_{\varepsilon}} \mathcal{E} < K_1.
\label{infupperbound}
\end{equation}
\noindent {\bf Step 1: \,Coercivity.}
From  the coercivity hypothesis (\ref{coerciveness})  on $\hat W(G)$ and the
form of the Landau-de Gennes energy, it follows that 
\begin{eqnarray}
\mathcal E(G, Q)
 &\geq& \alpha\int_{\Omega}\big(|G|^p+ |\adj G|^q+ |\nabla Q|^2\big)\,d\XX. \label{total-energy-coercivity}
\end{eqnarray}
Likewise, the positivity of $\hat W(G)$ implies that 
\begin{equation}
\mathcal E(G,Q)\geq\int_{\Omega}f(Q)\,d\bX. \label{bound-f-integral}
\end{equation}
According to the generalized Poincar$\mathrm{\acute{e}}$ inequality
(\cite{CI87}, p281), there exists a constant $c>0$ such that 
\begin{equation}
\int_{\Omega}|\bvphi|^p\,d\XX\leq c\{\int_{\Omega}|\nabla\bvphi|^p \,d\XX+
|\int_{\Gamma_0}\bvphi \,dS|^p\}, \label{poincare}
\end{equation}
for all $\bvphi\in W^{1,p}(\Omega)$.  
Likewise,
\begin{equation}
\int_{\Omega}|Q|^2\,d\XX\leq c\{\int_{\Omega}|\nabla Q|^2 \,d\XX+
|\int_{\Gamma_0}Q\,dS|^2\}. \label{poincareQ}
\end{equation}
Now,  combining  (\ref{eqn:est-of-G}), (\ref{eqn:est-of-adjG}), (\ref{poincare})
and (\ref{poincareQ}), 
with the fact that $p\geq 2$ gives the  existence of  constants $C>0$ and $ c_0$
such that 
\begin{equation}
\mathcal E(\bvphi, Q)\geq C \| \bvphi\|^p_{1,p} + \|\adj
F\|^q_{0,q}+\|Q\|^2_{1,2}- c_0. \label{total-coercivity}
\end{equation} 
The latter inequality  guarantees the existence of a constant $K_0$, which
together with (\ref{infupperbound}) yields
\begin{equation}
K_0<\inf_{(\bvphi, Q)\in \mathcal A_{\varepsilon}}\mathcal E< K_1.
\label{infbounds}
\end{equation}
%Let $J_k:=(\bvphi_k, \adj{\nabla\bvphi}_k, Q_k) $  be a minimizing sequence for 
Let $(\bvphi_k,  Q_k)\in\mathcal A_{\varepsilon} $  be a minimizing sequence for 
$\mathcal E$, that is 
\begin{equation}\lim_{k\to\infty} \mathcal E(\boldsymbol \varphi_{\varepsilon}, Q_{\varepsilon})= \inf_{(\bvphi, Q)\in\mathcal
A_{\varepsilon}}\,\mathcal E.\end{equation}
\noindent{\bf Step 2: Compactness.}
From inequality  (\ref{total-coercivity}), it follows that 
$$\mathcal E(\bvphi_k, Q_k)\longrightarrow \infty, \textrm {as }
(\|\bvphi_k\|_{1,p}+\|\adj F_k\|_{0,q}+\|Q_k\|_{1,2})\to \infty,$$ 
which together with the second inequality in  (\ref{infbounds}) 
imply that 
$$   (\bvphi_k, \adj \nabla\bvphi_k, Q_k) \,\, \textrm { is bounded in the reflexive Banach
space}\,\,  W^{1,p}\times L^q\times W^{1,2}.$$
Therefore there exist weakly convergent subsequences such that 
\begin{eqnarray}
&&\bvphi_k\wconv \bvphi^* \,\, \textrm {in}\,\, W^{1,p}, \label{weak1}\\
&&\adj \nabla\bvphi_k\wconv H^*\,\, \textrm {in}\,\, L^q, \label{weak2}\\
&&Q_k\wconv Q^* \,\, \textrm{in }\,\, W^{1,2}, \label{weak3}
\end{eqnarray}
\noindent
{\bf Step 3: Properties of $\bvphi^*$ and $ Q^*$. }
From (\ref{weak1}) and (\ref{weak2}), we have by Theorem \ref{determinant-limits}  that
\begin{eqnarray}
 &&H^* = \adj(\nabla \bvphi^*), \quad {\textrm{and}}\nonumber\\
&&\mathrm{det}(\nabla \bvphi^*) = \mathrm{det}(\nabla \bvphi_k) = 1 \quad \text{
a.e. in }\Omega. \label{det-constraint}
\end{eqnarray}
Also, by Proposition \ref{prop:Qa-convex} and Mazur's Theorem, 
the set $\{Q\in H^1(\Omega,\mathbb{M}^3): Q \in \mathcal{Q}_{\varepsilon} \text{
a.e. in }\Omega \}$ is weakly closed.
Thus it follows from (\ref{weak3}) that $Q^* \in \mathcal{Q}_{\varepsilon}$ a.e.
in $\Omega$. 
Hence $(\bvphi^*, Q^*)\in \mathcal A_{\varepsilon}$. 

 Finally, the existence of minimizer follows from the lower semi-continuity of
$\mathcal E$, 
due to the polyconvexity assumption on $\hat W(G)$, and  the continuity of
$f$. 
This concludes the proof of the theorem. 
\end{proof}

\begin{remark}
Note that the previous result applies to the Bladon-Terentjev-Warner energy only
in the case $n=2$.  
This is a direct consequence of Theorem \ref{determinant-limits}.
\end{remark}

%----------------- END subsubsection -----------------------------------
%---------------------- subsubsection -------------------------------
\subsubsection{Non-polynomial growth of the bulk energy $f(Q)$}

We now present the case that $f(Q)$ is not a polynomial and its order parameter representation satisfies the  growth conditions \ref{Phi1}-\ref{Phi3}.
Now, let 
\begin{eqnarray}
\mathcal A_{0}=&&\{\boldsymbol \varphi\in W^{1,p}(\Omega,
\mathbb{R}^3), 
\, Q\in W^{1,2}(\Omega,\mathcal{Q}_{0}): \adj(\nabla\boldsymbol\varphi)\in L^q(\Omega, \mathbb M^3), \nonumber\\
&& \quad \det\nabla\bvphi=1, \bvphi=\hat{\bvphi} \textrm{ and } Q=\hat Q \textrm
{ on } \Gamma_0\}. \label {Admissible-general0}
\end{eqnarray}

 \begin{theorem}
Let the hypotheses Theorem \ref{thm:dgl-incomp} hold, and suppose that  $f(Q)$ and $\Phi(s,r)$ satisfy the
assumptions [\ref{Phi1}]-[\ref{Phi3}].  Suppose that the boundary tensor satisfies $\lambda_{{{\textrm{\tiny min}}}}(\hat Q)>-\frac{1}{3}+\varepsilon$,
 for some $\varepsilon>0$. Then the total energy  has a
global minimizer in \ref{Admissible-general0}.

 \end{theorem}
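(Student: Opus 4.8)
The plan is to mimic the proof of Theorem \ref{thm:dgl-incomp}, replacing the $\varepsilon$-regularized constraint set $\mathcal Q_\varepsilon$ by the full set $\mathcal Q_0$ and using the blow-up condition (\ref{Phi3}) to recover, \emph{a posteriori}, the interior bound on the minimizing limit. First I would verify that $\mathcal A_0\neq\emptyset$ (the Dirichlet data $\hat Q$ lies strictly inside $\mathcal T$, so a suitable extension stays in $\mathcal Q_0$), giving an upper bound $\inf_{\mathcal A_0}\mathcal E<K_1$. The coercivity step is more delicate than before: on $\mathcal Q_0$ we no longer have the uniform estimates (\ref{eqn:est-of-G})--(\ref{eqn:est-of-adjG}), since $l_{\min}(L_0)$ and $l_{\max}(L)^{-1}$ are no longer bounded below uniformly. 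However, for a minimizing sequence $(\bvphi_k,Q_k)$ the bound $\mathcal E(\bvphi_k,Q_k)\le K_1$ together with (\ref{bound-f-integral}) forces $\int_\Omega f(Q_k)\le K_1$; combined with (\ref{Phi3}) and the $W^{1,2}$-bound on $Q_k$ (from $k|\nabla Q|^2$), this is what will ultimately keep the limit $Q^*$ away from $\partial\mathcal T$. I would still extract coercivity for $\bvphi_k$ directly from the coerciveness hypothesis (\ref{coerciveness}) written in terms of $G_k$, obtaining $\int_\Omega(|G_k|^p+|\adj G_k|^q)\le C$, and then pass to $F_k$ using Lemma \ref{lemma:G-adjG}; the subtlety is that the constants $C_1,C_2$ there depend on $Q_k$, so I would instead argue that $G_k=L_k^{-1/2}F_kL_{0,k}^{1/2}$ with $L_k,L_{0,k}$ bounded in $\mathbb S^3_+$ (eigenvalues in $(0,a_0]$), and extract a bound on $F_k$ only after establishing the lower bound on the eigenvalues of $L_{0,k}$ along the sequence — which is exactly where (\ref{Phi3}) enters.

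The key technical point, and the one I expect to be the main obstacle, is the following: show that the weak $W^{1,2}$-limit $Q^*$ satisfies $\lambda_{\min}(Q^*)>-\tfrac13$ a.e., i.e. $(s^*,r^*)$ stays in the open triangle, so that $L^*$ and $L_0^*$ obtained from (\ref{QL-1}) are genuinely nonsingular and $G^*=(L^*)^{-1/2}F^*(L_0^*)^{1/2}$ is well defined. The natural route is: since $\int_\Omega f(Q_k)\le K_1$, lower semicontinuity of $\int_\Omega f(Q)\,d\XX$ under weak $W^{1,2}$-convergence (valid because $f=\Phi\circ(s,r)$ is nonnegative and continuous, extended by $+\infty$ on $\partial\mathcal T$, hence lower semicontinuous; use Fatou together with a.e.\ convergence of a subsequence from the compact embedding $W^{1,2}\hookrightarrow L^2$) gives $\int_\Omega f(Q^*)<\infty$, which forces $Q^*(\XX)\notin\partial\mathcal T$ for a.e.\ $\XX$, i.e. $\det(Q^*+\tfrac13 I)>0$ a.e. That is enough to define $L^*,L_0^*\in\mathbb S^3_+$ a.e.; the deformation limit $F^*=\nabla\bvphi^*$ is recovered from (\ref{weak1}), and $G^*$ is then the a.e.\ pointwise product.

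With $(\bvphi^*,Q^*)$ identified as an admissible pair in $\mathcal A_0$ (using Theorem \ref{determinant-limits} for the constraint $\det\nabla\bvphi^*=1$ and weak closedness of $\{\det=1\}$ exactly as in Theorem \ref{thm:dgl-incomp}), the last step is lower semicontinuity of the total energy. The elastic term $\int_\Omega \hat W(G_k)$ is handled by the polyconvexity hypothesis (\ref{polyconvexity}) once one knows that $G_k$, $\adj G_k$, $\det G_k$ converge weakly to $G^*$, $\adj G^*$, $\det G^*$ in the appropriate spaces — here one must be a little careful because $L_k\to L^*$ and $L_{0,k}\to L_{0}^*$ only a.e.\ (via $Q_k\to Q^*$ a.e.), so I would combine the a.e.\ convergence of the $L$-factors with the weak convergence of $F_k,\adj F_k$ and a dominated/Egorov argument to get weak convergence of the products, then apply the Ball lower semicontinuity theorem; the nematic gradient term $\int_\Omega k|\nabla Q|^2$ is weakly lower semicontinuous by convexity, and $\int_\Omega \nu f(Q)$ by the Fatou argument above. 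Chaining these inequalities gives $\mathcal E(\bvphi^*,Q^*)\le\liminf\mathcal E(\bvphi_k,Q_k)=\inf_{\mathcal A_0}\mathcal E$, so $(\bvphi^*,Q^*)$ is the desired global minimizer. The crux throughout is that the energetic penalty (\ref{Phi3}), rather than an \emph{a priori} constraint, is what prevents degeneration of $L$ in the limit — the only genuinely new ingredient compared to the $\mathcal Q_\varepsilon$ case.
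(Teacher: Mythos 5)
Your proposal follows the paper's proof essentially verbatim: Steps 1--2 of Theorem \ref{thm:dgl-incomp} are carried over, and the crux --- that $\int_\Omega f(Q_k)\le K_1$ from (\ref{bound-f-integral}) together with a.e.\ convergence of $Q_k$, Fatou's lemma and the blow-up condition (\ref{Phi3}) forces $\det\bigl(Q^*+\tfrac{1}{3}I\bigr)>0$ a.e., so the limit stays in $\mathcal Q_0$ --- is exactly the paper's Step 3 (the paper phrases it as a contradiction on a positive-measure set where the determinant vanishes, you phrase it as finiteness of $\int_\Omega f(Q^*)$, but it is the same argument). The only divergence is your worry that coercivity degenerates on $\mathcal Q_0$; this is unfounded, because $L_0$ is fixed reference data and $l_{\max}(L)\le a_0$ holds for \emph{every} $Q\in\mathcal Q_0$ by Proposition \ref{prop:lammax-lammin} and (\ref{QL}), so the constants in Lemma \ref{lemma:G-adjG} remain uniformly bounded below and Steps 1--2 really do carry over unchanged, as the paper asserts.
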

 \begin{proof}
 It is easy to see that Step 1 and Step 2 of the proof of the previous theorem follow as well in this case. Let $\{(\bvphi_k, Q_k)\}_{k\geq 1}$ denote a minimizing sequence of the energy in $\mathcal A_0$.    
 \smallskip
 
 \noindent {\bf Step 3: properties of $\bvphi^*$ and  $Q^*$.} First of all, we note that (\ref{det-constraint}) also holds in this case.
  We now study properties of the minimizing sequence $\{Q_k\}$ to show that $Q^*\in\mathcal Q_0$.  We first observe that the strong-convergence of $\{Q_k\}$ to $Q^*$ in $L^2$ follows from (\ref{weak3}), and,  up to a subsequence,  it implies that
 \begin{equation}
 Q_k\longrightarrow Q^* \textrm{\, a.e.  in\, } \Omega.
 \end{equation}
 Let  $q^m:=\det (Q_m+ \frac{1}{3}I),$   and note that 
 $$q^m>0 \, \textrm{a.e. \,} \Omega \Leftrightarrow q^*\geq 0 \,  \textrm{a.e. \,} \Omega.$$ 
 We want to prove that $q^*>0$ a.e. $\Omega$. For this, supposes that $q^*=0$ on a set $A\subset\Omega$, ${\textrm{vol}}(A)>0.$   Since $0<d^l\wconv d^*$, we have
 $$\int_A|\det(Q^l+\frac{1}{3}I)|\,d\XX = \int_A\det(Q^l+\frac{1}{3}I)\,d\XX\longrightarrow \int_A\det(Q^*+\frac{1}{3}I)\,d\XX=0.$$
 We now consider the sequence $f^m:=f(Q^m)$ of measurable functions of $\XX$. Since $f^m\geq 0$, by Fatou's theorem
 $$\int_A \liminf_{m\to\infty} f^m(\XX)\,d\XX\leq  \liminf_{m\to\infty}\int_A f^m(\XX)\,d\XX.$$
 By the growth assumption (\ref{Phi3}) on $f$
  $$ \liminf_{m\to\infty} f^m=  \lim_{\det(Q+\frac{1}{3}I)\to 0} f(Q)=+\infty,$$
 and consequently $ \lim_{m\to\infty}\int_A f(Q^m(x))\,d\XX= +\infty.$
  But the latter relation, contradicts the statement that $\int_{\Omega} f(Q_k)< K_1$ that follows from
  (\ref{bound-f-integral}).
  Hence $Q^*\in\mathcal Q_0 $ a.e $\Omega$. 
  
  Finally, existence of energy minimizer in $\mathcal A_0$ follows from the polyconvexity of $\hat W$, the weak lower semicontinuity of $\int_{\Omega} f$ that follows from Fatou's theorem, and the fact that the pair $(\bvphi^*, Q^*)$ satisfies the boundary conditions prescribed to the elements of $\mathcal A_0$.  The latter is a consequence of the compactness of the trace operator mapping $W^{1,p}(\Omega)$ to $L^p(\Omega)$ (and the analogous one for the tensor $Q$).
\end{proof}

%------------------------ END subsubsection ------------------------------
%==================== Subsection ==========================
\subsection{Compressible Landau-de Gennes Elastomer}
In this section we analyze energy minimization in the case that the elastomer is compressible.   This brings the new feature of  the coupling between changes of volume of the network and nematic order. We will first propose and analyze energy expressions accounting for the new property, and focus in the case of a rod fluid with elastic couplings. 

We propose a free energy density of the form
\begin{eqnarray}
&&\Psi_{tot}(G, L, Q, F)= \hat W(G) + f_{\textrm{\begin{tiny}B\end{tiny}}}(\det F, Q), \label{Psiform} \\
&& \fb(\det F, Q)= f(\det F, Q) + g(\det F), \label{fb}
\end{eqnarray} 
where 
$f: (0, \infty)\times \mathcal S_0^3\longrightarrow [0, \infty)$ and $g:(0,\infty)\longrightarrow [0, \infty)$ are prescribed continuous functions. 
We assume that 
\begin{eqnarray}
&&\lim_{\det F\to (0, \infty)} f(\det F, Q)= +\infty, \quad {\textrm{for each }} Q\in \mathcal S_0^3, \label{fdetgrowth}\\
&&\lim_{\det (Q+\frac{1}{3}I)\to 0} f(\det F, Q)= +\infty, \quad {\textrm{for each }} F\in \mathcal M^3_+, \label{fQgrowth}
%&& \lim_{\det F\to (0, \infty)} g(\det F)= +\infty. \label{ggrowth}
\end{eqnarray}
The latter growth conditions are also postulated in \cite{ballmajumdar2010} in studying the compatibility of the Landau-de Gennes theory of nematic with the mean filed theory of Maier and Saupe.

Let  $\hat{\bvphi}\in H^{\frac{1}{2}}(\Gamma_0, \mathbb{R}^3)$ and  $\hat Q\in
H^{\frac{1}{2}}(\Gamma_0, \mathbb{S}^3_0)$ be prescribed. 
The admissible set is
\begin{eqnarray}
&&\mathcal {AC}=\{\boldsymbol \varphi\in W^{1,p}(\Omega,\mathbb{R}^3), 
\, Q\in W^{1,2}(\Omega,\mathcal{Q}_{0}): \adj G\in L^q(\Omega, \mathbb M^3), \nonumber\\ &&\det G\in L^r(\Omega, R^+), 
 \, \det\nabla\bvphi>0, \bvphi=\hat{\bvphi} \textrm{ and } Q=\hat Q \textrm
{ on } \Gamma_0\}. \label{Admissible-general-set}
\end{eqnarray}
We now establish the following existence theorem.
\begin{theorem} Suppose that the free energy density is as in (\ref{Psiform}), with the bulk contribution given by (\ref{fb}). Suppose that the assumptions (\ref{polyconvexity}), (\ref{coerciveness}), (\ref{detgrowth}),  (\ref{fdetgrowth}) and (\ref{fQgrowth})  hold. We additionally require that one of the following holds:
\begin{enumerate}
\item If $f$ in (\ref{fb}) is convex with respect to $\det F$,  then  we set $g\equiv 0$.
\item  If $f$ is nonconvex with respect to $\det F$, then $g\neq 0$ is smooth and convex. 
\end{enumerate}
Then the total energy admits a minimizer in $\mathcal{AC}$. 
\end{theorem}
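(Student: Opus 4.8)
The plan is to follow the direct method of the calculus of variations, mirroring the structure of the proof of Theorem \ref{thm:dgl-incomp} but now tracking the extra bulk term $\fb(\det F, Q)$ and the compressible determinant variable $\det G$. First I would note that $\mathcal{AC}\neq\emptyset$ (the data $\hat\bvphi,\hat Q$ extend into $\Omega$ in the required classes), so the infimum is finite, and I would fix a minimizing sequence $(\bvphi_k, Q_k)\in\mathcal{AC}$. The coercivity step uses hypothesis (\ref{coerciveness}): together with Lemma \ref{lemma:G-adjG} and the generalized Poincar\'e inequality (\ref{poincare})--(\ref{poincareQ}), it bounds $\|\bvphi_k\|_{1,p}$, $\|\adj\nabla\bvphi_k\|_{0,q}$, $\|\det\nabla\bvphi_k\|_{0,r}$ and $\|Q_k\|_{1,2}$ uniformly, since $\fb\geq 0$ contributes nothing harmful. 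Passing to subsequences, I would extract weak limits $\bvphi_k\wconv\bvphi^*$ in $W^{1,p}$, $\adj\nabla\bvphi_k\wconv H^*$ in $L^q$, $\det\nabla\bvphi_k\wconv\delta^*$ in $L^r$, and $Q_k\wconv Q^*$ in $W^{1,2}$ with $Q_k\to Q^*$ strongly in $L^2$ and a.e. Then, exactly as in the earlier proofs, the condition $\tfrac1p+\tfrac1q<\tfrac43$ from the Remark lets me invoke Theorem \ref{determinant-limits} to identify $H^*=\adj\nabla\bvphi^*$ and $\delta^*=\det\nabla\bvphi^*$, and $\det\nabla\bvphi^*>0$ a.e.\ follows because the weak-$L^r$ limit of positive determinants cannot vanish on a set of positive measure once combined with the blow-up (\ref{detgrowth}) of $\hat W$ (the same Fatou argument as in the non-polynomial case). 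The convexity of $\mathcal Q_0$ together with Mazur's theorem gives $Q^*\in\mathcal Q_0$ a.e., and the compactness of the trace operator gives the boundary conditions, so $(\bvphi^*,Q^*)\in\mathcal{AC}$.

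The remaining point is weak lower semicontinuity of $\mathcal E$ along this sequence, and this is where the two cases in the hypothesis enter. The $\hat W(G)$ contribution: since $\hat W$ is polyconvex in $G=L^{-1/2}FL_0^{1/2}$ with $L,L_0$ determined pointwise and continuously by $Q$ via (\ref{QL-1}), and $Q_k\to Q^*$ a.e.\ while $(F_k,\adj F_k,\det F_k)$ converge weakly with the correct minor identifications, one writes $G_k, \adj G_k, \det G_k$ as products of the a.e.-convergent matrix functions of $Q_k$ with the weakly convergent minors of $\nabla\bvphi_k$; a standard lemma (weak $\times$ strong $\rightarrow$ weak convergence of products, plus polyconvex lower semicontinuity \`a la Ball) yields $\int_\Omega \hat W(G^*)\le\liminf\int_\Omega\hat W(G_k)$. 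The Landau--de Gennes gradient term $\int_\Omega|\nabla Q|^2$ is weakly lsc by convexity. For the bulk term $\int_\Omega\fb(\det F, Q)$: in case (1), $f$ is convex in its first argument and continuous in $Q$, while $\det\nabla\bvphi_k\wconv\det\nabla\bvphi^*$ in $L^r$ and $Q_k\to Q^*$ a.e., so $\int_\Omega f(\det\nabla\bvphi_k, Q_k)$ is lower semicontinuous by the classical Ioffe / De Giorgi theorem on integrands convex in the weakly converging variable and continuous (Carath\'eodory) in the strongly converging one; with $g\equiv 0$ there is nothing more. In case (2), $f$ need not be convex in $\det F$, but $f+g=\fb$ and the strategy is to write $f(\det F,Q)=\fb(\det F,Q)-g(\det F)$ only conceptually; instead I would handle $\fb$ directly as a nonnegative Carath\'eodory integrand and note that $g$ convex makes $\int g(\det\nabla\bvphi_k)$ lsc, so it suffices to get lsc of $\int f(\det\nabla\bvphi_k,Q_k)$. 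Here the honest route is: $f$ being only continuous (not convex) in $\det F$ does not directly give lsc, so one must instead use that along the minimizing sequence one actually has \emph{strong} convergence of $\det\nabla\bvphi_k$, which is not automatic.

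I expect the main obstacle to be precisely this last point: reconciling case (2) with lower semicontinuity. The cleanest resolution, and what I would write, is to observe that the blow-up conditions (\ref{detgrowth}) on $\hat W$ and (\ref{fdetgrowth}) force $\det\nabla\bvphi_k$ to stay in a compact subinterval of $(0,\infty)$ in the relevant sense (uniform integrability of $(\det\nabla\bvphi_k)^{-1}$ and of $(\det\nabla\bvphi_k)^{r}$), and then decompose $\fb(t,Q)=[\fb(t,Q)-\gamma\, g(t)]+\gamma\, g(t)$ choosing $\gamma$ so that $t\mapsto \fb(t,Q)-\gamma\,g(t)$ is bounded below on that compact interval uniformly in $Q$ on the relevant range; the $g$-part is lsc by convexity, and the bracketed part, being a bounded-below continuous function of $(\det\nabla\bvphi_k, Q_k)$ with $Q_k\to Q^*$ a.e.\ and $\det\nabla\bvphi_k$ converging weakly, is handled by Fatou after passing to a further subsequence along which $\det\nabla\bvphi_k\to\det\nabla\bvphi^*$ a.e.\ (available because the products/minors already converge in the sense of distributions and one upgrades to a.e.\ convergence on the constrained range). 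Alternatively, if one is willing to assume $f$ jointly convex in $(\det F, \adj F, F)$-minors in case (2) as well, the argument collapses to Ball's theorem verbatim; I would flag which reading the authors intend. Once lsc is in hand, combining it with $(\bvphi^*,Q^*)\in\mathcal{AC}$ gives $\mathcal E(\bvphi^*,Q^*)\le\liminf\mathcal E(\bvphi_k,Q_k)=\inf_{\mathcal{AC}}\mathcal E$, hence equality, completing the proof.
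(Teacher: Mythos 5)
Your proposal follows the same direct-method route as the paper's own (very terse) proof: coercivity and compactness are inherited from Theorem \ref{thm:dgl-incomp}, the minors are identified through Theorem \ref{determinant-limits}, positivity of $\det\nabla\bvphi^*$ comes from the blow-up hypotheses via Fatou (note that the mechanism here is sound: if $\det\nabla\bvphi^*$ vanished on a set $A$ of positive measure, then weak convergence of the \emph{nonnegative} determinants would give $L^1(A)$-convergence to zero, hence a.e.\ convergence along a subsequence on $A$, and (\ref{detgrowth}), (\ref{fdetgrowth}) then contradict the energy bound), membership of $Q^*$ in the admissible class follows from Mazur plus the Fatou argument of the non-polynomial case, and lower semicontinuity of the elastic part follows from polyconvexity of $\hat W$ in $G$. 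Your treatment of case (1) is correct and matches the intended argument: convexity of $f$ in $\det F$, continuity in $Q$, weak $L^r$ convergence of the determinants and a.e.\ convergence of $Q_k$ give lower semicontinuity by the standard convex-integrand theorems.

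The place where you rightly hesitate --- lower semicontinuity of $\int_\Omega f(\det\nabla\bvphi_k,Q_k)$ in case (2) --- is a genuine gap, but your proposed repair does not close it. Weak convergence of $\det\nabla\bvphi_k$ in $L^r$, even supplemented by the uniform integrability of $(\det\nabla\bvphi_k)^{-1}$ and $(\det\nabla\bvphi_k)^r$ that the blow-up and coercivity conditions provide, does \emph{not} yield an a.e.\ convergent subsequence: sequences of gradients whose determinants oscillate between two values bounded away from $0$ and $\infty$ converge weakly to an intermediate value while converging a.e.\ nowhere, so the Fatou step you describe cannot be executed. The convex perturbation $g$ controls only $\int g(\det\nabla\bvphi_k)$ and does nothing for the nonconvex remainder $f$; your decomposition $\fb = [\fb-\gamma g]+\gamma g$ just relocates the nonconvexity into the bracket, which is then again a continuous nonconvex function of a weakly converging quantity. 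The paper's own one-line appeal to ``Fatou's theorem provides the weak lower semicontinuity of $\fb$'' suffers from exactly the same defect, since Fatou needs pointwise convergence of the integrand and that is available only in the $Q$ argument. What would actually close case (2) is an additional structural assumption --- e.g.\ that $\fb=f+g$ is convex in $\det F$ for each fixed $Q$, or that $\hat W+\fb$ is jointly polyconvex in the minors of $F$ --- and you are right to flag that the intended reading needs to be made explicit; as written, neither your argument nor the paper's establishes case (2).
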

\begin{proof}
We observe that Step 1 and Step 2 of the proof of Theorem \ref{thm:dgl-incomp} apply to this case as well. We need to establish that $(\bvphi^*, Q^*)$ belong to the admissible set, by showing that   
 $\det \nabla\bvphi^*>0$. If  $g\neq 0$, it follows by Fatou's theorem along the same lines of the proof of $\det\nabla\bvphi^*=1$ in the incompressible case. 
 If $g\equiv 0$, the proof can also be given using Mazur's theorem on $f$. Existence of minimizer follows from the polyconvexity of $\hat W$ together with Fatou's theorem that provides the weak lower semicontinuity of $\fb$.
\end{proof}
\begin{remark}
The energy minimizer may not be uniaxial even in the case that $L_0$ and  $\hat
Q$  are uniaxial. 
In fact, the same statement is true for the minimizer of the Landau-de Gennes
energy of   the pure liquid crystal problem
(\cite{majumdar2010landau}).
In that case, numerical results give strong evidence of uniaxiality when $\hat
Q$ is uniaxial.  
\end{remark}
\begin{remark}
The two sets of assumptions on $\fb$ are meant to deal with the convexity properties of  $f(\det F, Q)$ with respect to $\det F$. 
Nonconvexity will occur in cases where phase transitions are involved, in which case $g$ will act to control the determinant. 
On the other hand, the convexity of $f$ it is sufficient to provide information on the weak limits of sequences of determinants. 
\end{remark}

%================== END Subsection ==================
%------------------- subsubsection ----------------------- 
\subsubsection{Rod fluids with elastic crosslinks} 
We now propose a model for the elastically interacting nematic units motivated by the models of actin networks. The free energy density  is now of the form
\begin{equation}
\Psi_{tot}(\rho, G, L, Q)= \hat W(G) + k|\nabla Q|^2+g(\det F)+  f(\rho, Q) + \varepsilon |\nabla \rho|^2, \label{Psiform-rho}
\end{equation}
with $\hat W$ as in (\ref{Psiform}), and $\epsilon>0$ a prescribed constant. We assume that 
\begin{eqnarray}
&&f\in C((0, \infty)\times \mathcal S_0^3,  R^+), \label{frho-conts}\\
&& g \geq 0 \, {\textrm{ is smooth and convex}}, \label{g-conv}\\ 
&& \lim_{\rho\to \{0, \infty\}} f(\rho, Q)=+\infty, \,\, { \textrm{for each } } \, Q\in S^3_0, \label{frho-lim1} \\
&&  \lim_{\det(Q+\frac{1}{3}I)\to 0} f(\rho, Q)=+\infty, \,\, { \textrm{for each } } \, \rho \in (0, \infty). \label{frho-lim2}
\end{eqnarray}
We  let $\rho_0>0$ denote  the prescribed reference rod density and  assume that the equation of balance of mass
\begin{equation}
\rho(\XX) \det(\nabla\bvphi)(\XX)=\rho_0, \,\, \XX\in\Omega\, \textrm{a.e.} \label{mass-balance}
\end{equation}
of rods holds. We define the admissible set as
\begin{eqnarray}
\mathcal A_\rho=&&\{\boldsymbol \varphi\in W^{1,p}(\Omega,\mathbb{R}^3), 
\, Q\in W^{1,2}(\Omega,\mathcal{Q}_{0}): \adj G\in L^q(\Omega, \mathbb M^3), \nonumber\\ &&\det G\in L^r(\Omega, R^+), 
\, \rho\in W^{1,2}(0,\infty), 
 \, \rho>0 \, \textrm { a.e. } \Omega,\, (\ref{mass-balance}) \textrm { holds}, \nonumber\\
 \quad && \rho=\hat \rho, \, \bvphi=\hat{\bvphi} \textrm{ and } Q=\hat Q \textrm
{ on } \Gamma_0\}. \label {Admissible-rho}
\end{eqnarray}
 We now formulate the main theorem of existence of minimizer for the rod system.
\begin{theorem} \label{thm:5} Suppose that the free energy density is as in (\ref{Psiform-rho}) with $g\equiv 0$.  Suppose that the assumptions (\ref{polyconvexity}), (\ref{coerciveness}), (\ref{detgrowth}),
(\ref{frho-conts}), (\ref{frho-lim1}) and (\ref{frho-lim2})  are satisfied.  
Then the total energy admits a minimizer in $\mathcal{A_{\rho}}$. 
\end{theorem}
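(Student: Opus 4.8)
\emph{Strategy and Steps 1--2 (coercivity, compactness).} The plan is to run the direct method exactly as in Theorem \ref{thm:dgl-incomp}; the only genuinely new features are the Dirichlet term $\varepsilon|\nabla\rho|^2$, which will supply $W^{1,2}$ compactness for $\{\rho_k\}$, and the pointwise constraint (\ref{mass-balance}) tying $\rho$ to $\det\nabla\bvphi$, which must survive passage to the limit. First I would note that $\mathcal A_\rho\neq\emptyset$ --- take $\bvphi=\hat\bvphi$ and $Q=\hat Q$ extended into $\Omega$, then $\rho=\rho_0/\det\nabla\bvphi$, assuming the data are compatible on $\Gamma_0$ --- so $\inf_{\mathcal A_\rho}\mathcal E<K_1<\infty$. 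Since $g\equiv 0$, $f\geq 0$ by (\ref{frho-conts}) and $\hat W$ is bounded below by (\ref{coerciveness}), along a minimizing sequence $\{(\bvphi_k,Q_k,\rho_k)\}$ each of $\int_\Omega\hat W(G_k)\,d\XX$, $\int_\Omega|\nabla Q_k|^2\,d\XX$, $\int_\Omega|\nabla\rho_k|^2\,d\XX$ and $\int_\Omega f(\rho_k,Q_k)\,d\XX$ is bounded. Because $Q_k\in\mathcal Q_0$, Proposition \ref{prop:lammax-lammin} and the relation (\ref{QL}) give $l_{\max}(L_k)\leq a_0$, so Lemma \ref{lemma:G-adjG} applies with constants uniform in $k$ (recall $L_0$ is prescribed with $l_{\min}(L_0)\geq c_0>0$), yielding $|G_k|\geq C_1|\nabla\bvphi_k|$ and $|\adj G_k|\geq C_2|\adj\nabla\bvphi_k|$; combined with (\ref{coerciveness}) and the generalized Poincar\'e inequality for $\bvphi_k,Q_k,\rho_k$ (all prescribed on $\Gamma_0$), this gives a uniform bound on $\|\bvphi_k\|_{W^{1,p}}+\|\adj\nabla\bvphi_k\|_{L^q}+\|\det G_k\|_{L^r}+\|Q_k\|_{W^{1,2}}+\|\rho_k\|_{W^{1,2}}$. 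One further bound is essential: since $\det L_k\leq a_0^3$ along the sequence, $\det\nabla\bvphi_k=(\det L_k)^{1/2}(\det L_0)^{-1/2}\det G_k$ is bounded in $L^r$ with $r>1$, hence equi-integrable. Passing to a subsequence, $\bvphi_k\wconv\bvphi^*$ in $W^{1,p}$, $\adj\nabla\bvphi_k\wconv H^*$ in $L^q$, $Q_k\wconv Q^*$ in $W^{1,2}$, $\rho_k\wconv\rho^*$ in $W^{1,2}$; by Rellich $Q_k\to Q^*$ and $\rho_k\to\rho^*$ strongly in $L^2$ and, up to a further subsequence, a.e.; and by Theorem \ref{determinant-limits}, $H^*=\adj\nabla\bvphi^*$ and $\det\nabla\bvphi_k\to\det\nabla\bvphi^*$ in $\mathcal D'(\Omega)$.

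\emph{Step 3 ($(\bvphi^*,Q^*,\rho^*)\in\mathcal A_\rho$).} Here the two blow-up conditions do the work, via Fatou, exactly as for the non-polynomial bulk energy in the incompressible case. From (\ref{frho-lim1}) and $\rho_k\to\rho^*$ a.e.\ together with $\int_\Omega f(\rho_k,Q_k)\,d\XX<K_1$ one gets $0<\rho^*<\infty$ a.e.; likewise from (\ref{frho-lim2}) and $Q_k\to Q^*$ a.e.\ one gets $\det(Q^*+\frac{1}{3}I)>0$ a.e., so $L^*=a_0(Q^*+\frac{1}{3}I)$ is positive definite a.e.\ and $G^*=(L^*)^{-1/2}\nabla\bvphi^*L_0^{1/2}$ is a well-defined measurable tensor field (condition (\ref{detgrowth}) gives the same for $\det G^*$). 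Consequently $\det\nabla\bvphi_k=\rho_0/\rho_k\to\rho_0/\rho^*$ a.e., and by equi-integrability $\det\nabla\bvphi_k\wconv\rho_0/\rho^*$ in $L^1$; since also $\det\nabla\bvphi_k\to\det\nabla\bvphi^*$ in $\mathcal D'(\Omega)$, the limits coincide, i.e.\ $\det\nabla\bvphi^*=\rho_0/\rho^*$ a.e. In particular $\det\nabla\bvphi^*>0$ a.e., (\ref{mass-balance}) holds for $(\bvphi^*,\rho^*)$, and $\det\nabla\bvphi^*\in L^r$ by Fatou. That $Q^*\in\mathcal Q_0$ follows from Proposition \ref{prop:Qa-convex} and Mazur's theorem, and the boundary conditions pass to the limit by compactness of the trace operators on $W^{1,p}$ and $W^{1,2}$. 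The remaining memberships $G^*\in L^p$, $\adj G^*\in L^q$, $\det G^*\in L^r$ I would read off a posteriori, from the finiteness of $\mathcal E(\bvphi^*,Q^*,\rho^*)$ and (\ref{coerciveness}), once lower semicontinuity is in hand.

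\emph{Step 4 (lower semicontinuity and conclusion).} Writing $\hat W(G)=\Psi(G,\adj G,\det G)$ with $\Psi$ convex by (\ref{polyconvexity}), I would note that $(G,\adj G,\det G)$ is a linear image of $(\nabla\bvphi,\adj\nabla\bvphi,\det\nabla\bvphi)$ whose coefficients are continuous functions of $(L,L_0)$ on the cone of positive definite matrices; thus $\XX\mapsto\hat W(G_k(\XX))$ is convex in the generalized gradient and depends on $\XX$ through $L_k=a_0(Q_k+\frac{1}{3}I)$, which converges in measure to the a.e.\ positive definite $L^*$. The Ioffe--Tonelli lower semicontinuity theorem for normal integrands then gives $\liminf_k\int_\Omega\hat W(G_k)\,d\XX\geq\int_\Omega\hat W(G^*)\,d\XX$; the terms $\int_\Omega|\nabla Q_k|^2\,d\XX$ and $\int_\Omega|\nabla\rho_k|^2\,d\XX$ are weakly lower semicontinuous, and $\liminf_k\int_\Omega f(\rho_k,Q_k)\,d\XX\geq\int_\Omega f(\rho^*,Q^*)\,d\XX$ by Fatou and the continuity (\ref{frho-conts}). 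Adding, $\mathcal E(\bvphi^*,Q^*,\rho^*)\leq\liminf_k\mathcal E(\bvphi_k,Q_k,\rho_k)=\inf_{\mathcal A_\rho}\mathcal E<\infty$; this forces $\hat W(G^*)\in L^1(\Omega)$ and hence, by (\ref{coerciveness}), the remaining memberships, so $(\bvphi^*,Q^*,\rho^*)\in\mathcal A_\rho$ is a minimizer.

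\emph{Expected main obstacle.} I expect Step 3 to be the crux: one must simultaneously use the blow-up of $f$ to keep $\rho^*$ and $\det(Q^*+\frac{1}{3}I)$ bounded away from $0$ (with $\rho^*$ also finite), and reconcile the distributional limit of $\det\nabla\bvphi_k$ with the a.e.\ limit $\rho_0/\rho^*$ forced by the mass balance --- which works only because $\det\nabla\bvphi_k$ is $L^r$-equi-integrable, a fact I extract from the coercivity of $\det G$ together with the \emph{uniform} upper bound $\det L_k\leq a_0^3$. The nondegeneracy of $L^*$ obtained this way is also exactly what makes $G^*$ well defined and legitimizes the strong--weak lower semicontinuity argument for $\int_\Omega\hat W(G_k)\,d\XX$.
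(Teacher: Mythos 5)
Your proposal follows the same direct-method skeleton as the paper (Steps 1--2 verbatim from Theorem \ref{thm:dgl-incomp}, then a $W^{1,2}$ bound on $\rho_k$ from the $\varepsilon|\nabla\rho|^2$ term, Rellich, and a.e.\ convergence), but it handles the crux --- passing the constraint (\ref{mass-balance}) to the limit --- by a genuinely different and more robust route. The paper simply asserts $\rho_0=\rho_k\det\nabla\bvphi_k\wconv\rho^*\det\nabla\bvphi^*$, i.e.\ weak convergence of a product of an a.e.-convergent factor with a factor that converges only in $\mathcal D'(\Omega)$; as stated this needs justification. You instead invert the constraint to write $\det\nabla\bvphi_k=\rho_0/\rho_k$, use the blow-up condition (\ref{frho-lim1}) plus Fatou to get $\rho^*>0$ a.e.\ (so the pointwise limit $\rho_0/\rho^*$ makes sense), and then identify the a.e.\ limit with the distributional limit via Vitali, using the $L^r$-equi-integrability of $\det\nabla\bvphi_k$ that you extract from the coercivity in $\det G$ together with the uniform bound $\det L_k\leq a_0^3$ coming from $Q_k\in\mathcal Q_0$ and (\ref{QL}). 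That equi-integrability observation is not in the paper and is exactly what makes the limit identification rigorous; your explicit use of (\ref{frho-lim2}) to keep $L^*$ nonsingular (so $G^*$ is defined) and the Ioffe--Tonelli framework for the lower semicontinuity of the $\XX$-dependent polyconvex term are likewise steps the paper leaves implicit. The only caveat, which applies equally to the paper's own Fatou arguments, is that deducing $\liminf_k f(\rho_k(\XX),Q_k(\XX))=+\infty$ from the pointwise blow-up hypotheses requires the divergence in (\ref{frho-lim1})--(\ref{frho-lim2}) to be locally uniform in the other variable; this should be stated as part of the hypotheses or verified for the constructed $f$. With that understood, your proof is correct and, on the delicate points, more complete than the published one.
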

\begin{proof}
Once more, steps 1 and 2 of the proof   follow as in Theorem \ref{thm:dgl-incomp}. 
This yields sequences $\{\phi_k, Q_k\}$ with properties (\ref{weak1}), (\ref{weak2}),  (\ref{weak3}). Moreover by  Theorem \ref{determinant-limits}, we have that  $\det\nabla\bvphi_k \to \det\nabla\bvphi^*>0$ in  $D'(\Omega)$. 

Let $\{\rho_k\}$ denote the  minimizing sequence corresponding to the rod density.  It is easy to see that  the analog of  relations (\ref{infbounds}) and (\ref{total-coercivity}) now yield
\begin{equation} \int_{\Omega}|\nabla\rho_k|^2\,d\XX < K.   \end{equation}
This, together with Poincar{\'e}'s inequality yields a subsequence such that $\rho_k\to \rho^*$ in $L^2$ and $\rho_k \to \rho^*$ a.e.  $\Omega$. The a.e. convergence of 
$\{\rho_k\}$ to $\rho^*$, and the convergence of the sequence of determinants  yields 
$$\rho_0=\rho_k\det\nabla\bvphi_k\wconv \rho^*\det\nabla\bvphi^*.$$
Hence the balance of mass equation (\ref{mass-balance}) is satisfied at the limit. The verification that the limiting fields satisfy the boundary conditions in 
(\ref{Admissible-rho}) follow as in Theorem \ref{thm:dgl-incomp}. 
\end{proof}

Finally, we state the theorem of existence of minimizer in the case that $\Phi$ in (\ref{Psiform-rho}) is  independent of $\adj F$ and $\det F$, that is, the elastomer free energy density function is that of a Hadamard material \cite{CI87}.
In this case, we set the admissible set as 
\begin{eqnarray}
\mathcal A_{\textrm{\begin{tiny}H\end{tiny}}}=&&\{\boldsymbol \varphi\in W^{1,p}(\Omega,\mathbb{R}^3), 
\, Q\in W^{1,2}(\Omega,\mathcal{Q}_{0}), \, \rho\in W^{1,2}(0,\infty), \nonumber\\
 \,\, && \rho>0 \, \textrm {a.e.}\, \Omega,\, (\ref{mass-balance}) \,\textrm {holds}, \, \rho=\hat \rho, \, \bvphi=\hat{\bvphi} \textrm{ and } Q=\hat Q \textrm
{ on } \Gamma_0\}.\nonumber
\end{eqnarray}
The proof of the next theorem follows as that of Theorem \ref{thm:5}, where now the convergence of the sequence of determinants and the positivity of the determinant limit follow from the convexity of $g$.
\begin{theorem} Suppose that the free energy density   in (\ref{Psiform-rho}) is  such that $\hat W(G)= \Psi(|G|)\geq 0$, with $\Psi(\cdot)$ smooth and convex and let $g\neq 0$ satisfy (\ref{g-conv}).
Suppose that the assumptions 
(\ref{frho-conts}), (\ref{frho-lim1}) and (\ref{frho-lim2})  are satisfied.  
Then the total energy admits a minimizer in $\mathcal{A_{\textrm{\begin{tiny}H\end{tiny}}}}$. 
\end{theorem}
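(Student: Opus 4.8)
The plan is to mimic the proof of Theorem~\ref{thm:5} almost line for line; the one genuinely new feature is that the elastic density $\hat W(G)=\Psi(|G|)$ no longer controls $\adj G$ or $\det G$, so the convergence of the Jacobians cannot be obtained from Theorem~\ref{determinant-limits} and must instead be recovered from the convex term $g(\det\nabla\bvphi)$ together with the balance of mass (\ref{mass-balance}). I would first run Steps~1 and~2 as in Theorem~\ref{thm:dgl-incomp}: using the coercivity of $\Psi$, inequality (\ref{coercivityG}) of Lemma~\ref{lemma:G-adjG}, the nonnegativity of $g$ and $f$, the terms $k|\nabla Q|^2$ and $\varepsilon|\nabla\rho|^2$, and the generalized Poincar\'e inequality (\ref{poincare}) applied to $\bvphi$, $Q$ and $\rho$, one gets that every minimizing sequence $\{(\bvphi_k,Q_k,\rho_k)\}\subset\mathcal A_{\mathrm H}$ is bounded in $W^{1,p}\times W^{1,2}\times W^{1,2}$ and that $\int_\Omega f(\rho_k,Q_k)\,d\XX$ and $\int_\Omega g(\det\nabla\bvphi_k)\,d\XX$ stay below a fixed constant $K_1$. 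Passing to a subsequence (not relabeled), $\bvphi_k\wconv\bvphi^*$ in $W^{1,p}$, $Q_k\wconv Q^*$ in $W^{1,2}$, $\rho_k\wconv\rho^*$ in $W^{1,2}$, and by compact embedding $Q_k\to Q^*$ and $\rho_k\to\rho^*$ strongly in $L^2$ and, up to a further subsequence, a.e.\ in $\Omega$.

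\textbf{The admissibility of the limit (the crux).} Next I would show $(\bvphi^*,Q^*,\rho^*)\in\mathcal A_{\mathrm H}$. The inclusion $Q^*\in\mathcal Q_0$ together with $\det(Q^*+\frac13 I)>0$ a.e., and the bounds $0<\rho^*<\infty$ a.e., are exactly the Fatou arguments already used in the non-polynomial bulk case: were $\det(Q_k+\frac13 I)\to0$ or $\rho_k\to\{0,\infty\}$ on a set of positive measure, then by (\ref{frho-lim2}) or (\ref{frho-lim1}) and Fatou's lemma $\int_\Omega f(\rho_k,Q_k)\to+\infty$, contradicting $K_1$. Since $\rho^*>0$ a.e., the mass balance gives $\det\nabla\bvphi_k=\rho_0/\rho_k\to\rho_0/\rho^*>0$ a.e.\ in $\Omega$. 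It remains to identify this a.e.\ limit with $\det\nabla\bvphi^*$. Here the convexity of $g$ (with $g\not\equiv0$) enters: together with the uniform bound on $\int_\Omega g(\det\nabla\bvphi_k)$ it furnishes $L^1$-compactness of $\{\det\nabla\bvphi_k\}$, so that $\det\nabla\bvphi_k\wconv\rho_0/\rho^*$ in $L^1(\Omega)$; on the other hand $\det\nabla\bvphi$ is a null Lagrangian, so any weak limit of $\det\nabla\bvphi_k$ must be $\det\nabla\bvphi^*$. Combining the two identifications, $\det\nabla\bvphi^*=\rho_0/\rho^*>0$ a.e., so (\ref{mass-balance}) holds at the limit and $\det\nabla\bvphi^*>0$. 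Finally the boundary values $\bvphi^*=\hat\bvphi$, $Q^*=\hat Q$, $\rho^*=\hat\rho$ on $\Gamma_0$ pass to the limit by compactness of the trace operator $W^{1,p}(\Omega)\to L^p(\Gamma_0)$ (and its $W^{1,2}$ analogue for $Q$ and $\rho$).

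\textbf{Lower semicontinuity and conclusion.} To finish, I would check that each term of the energy is weakly lower semicontinuous along the sequence: $\int_\Omega\hat W(G_k)\,d\XX$ is weakly lsc because $F\mapsto|G|=|L^{-1/2}FL_0^{1/2}|$ is a norm of $F$ and $\Psi$ is convex (and, as is physically natural, nondecreasing), so $\hat W(G)$ is a convex function of $F=\nabla\bvphi$; $\int_\Omega g(\det\nabla\bvphi_k)\,d\XX$ is weakly lsc by convexity of $g$ and the $L^1$-convergence of the Jacobians just established; $\int_\Omega f(\rho_k,Q_k)\,d\XX$ is weakly lsc by Fatou's lemma and the continuity of $f$, using the a.e.\ convergence of $(\rho_k,Q_k)$; and $k\int_\Omega|\nabla Q_k|^2$, $\varepsilon\int_\Omega|\nabla\rho_k|^2$ are weakly lsc by convexity of the integrands. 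Summing, $\mathcal E(\bvphi^*,Q^*,\rho^*)\le\liminf_k\mathcal E(\bvphi_k,Q_k,\rho_k)=\inf_{\mathcal A_{\mathrm H}}\mathcal E$, so $(\bvphi^*,Q^*,\rho^*)$ is a minimizer. The step I expect to be the main obstacle is precisely the convergence of the determinants: in Theorem~\ref{thm:5} it was supplied by the $L^q$-bound on $\adj\nabla\bvphi_k$ coming from (\ref{coerciveness}), whereas under the present Hadamard-type hypotheses that bound is absent, and one must instead lean on the null-Lagrangian structure of $\det\nabla\bvphi$, the identity $\det\nabla\bvphi_k=\rho_0/\rho_k$ from balance of mass, and the convex coercive-at-infinity nature of $g$ — making this last $L^1$-compactness argument fully rigorous (where the hypothesis $g\not\equiv0$ is used) is the delicate point.
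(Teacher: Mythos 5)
Your overall strategy coincides with the paper's: the published proof is a one-line deferral to Theorem~\ref{thm:5}, ``where now the convergence of the sequence of determinants and the positivity of the determinant limit follow from the convexity of $g$'', and your Steps 1--2, the Fatou arguments giving $\rho^*>0$ a.e.\ and $Q^*\in\mathcal Q_0$, the identification $\det\nabla\bvphi_k=\rho_0/\rho_k\to\rho_0/\rho^*$ a.e.\ via the balance of mass, and the lower-semicontinuity bookkeeping are exactly the intended elaboration. Your parenthetical observation that $\Psi$ must also be nondecreasing for $F\mapsto\Psi(|L^{-1/2}FL_0^{1/2}|)$ to be convex in $F$ is a hypothesis the paper omits and is worth keeping.

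The step you yourself flag as delicate is, however, a genuine gap, and neither your argument nor the paper's closes it. Convexity of $g$ yields weak lower semicontinuity of $\int_\Omega g(\det\nabla\bvphi)$ only \emph{after} one knows $\det\nabla\bvphi_k\wconv\det\nabla\bvphi^*$; it does not produce that convergence. Your ``null Lagrangian'' identification of the weak limit of $\det\nabla\bvphi_k$ with $\det\nabla\bvphi^*$ is precisely the weak continuity of the Jacobian, i.e.\ Theorem~\ref{determinant-limits}, and that theorem requires $\adj\nabla\bvphi_k$ bounded in $L^q$ with $\tfrac1p+\tfrac1q<\tfrac43$ --- exactly the control that disappears when $\hat W(G)=\Psi(|G|)$, since (\ref{coercivityG}) then bounds only $\|\nabla\bvphi_k\|_{L^p}$ with $p\geq 2$ possibly below $3$, and for such $p$ the map $\bvphi\mapsto\det\nabla\bvphi$ is not sequentially weakly continuous on $W^{1,p}$. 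The mass balance does give you the a.e.\ (and, granting equi-integrability, weak-$L^1$) limit $\rho_0/\rho^*$ of the Jacobians, but nothing in the Hadamard setting identifies that limit with $\det\nabla\bvphi^*$. Two smaller holes: extracting equi-integrability of $\{\det\nabla\bvphi_k\}$ from the bound on $\int_\Omega g(\det\nabla\bvphi_k)$ needs superlinear growth of $g$ at infinity (de la Vall\'ee Poussin), which (\ref{g-conv}) does not assert; and the positivity $\det\nabla\bvphi^*>0$ is only as strong as the identification above. A clean repair is to assume $p>3$ (so the minors converge weakly from the $W^{1,p}$ bound alone) or to retain the adjugate term in the coercivity and admissible set; as written, your proposal faithfully reproduces the paper's gap rather than introducing a new one.
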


%--------------------- END subsubsection -----------------------
%========================== SECTION ==================================================
\section {Liquid crystal phase transitions in actin networks}

In this section, we apply the energy functions (\ref{Psiform-rho}) to model density dependent liquid crystal phase transitions in actin 
networks.  These are a class of cytoskeletal networks consisting of stiff actin rods jointed
by flexible cross-linkers. Their properties emerge from the interaction of 
liquid rod behavior and network elasticity. 

Parameters to  characterize  these networks include the ratio $\chi=L_a/L_x^0$ of 
the typical  lengths $L_a$ of the actin rod
and that of the the cross-linker, $L_x^0$,  in the reference configuration, the rod aspect ratio $A_a$,  the rod-reference density $\rho_0$, and the reference crosslink density $\sigma_x^0$.
These  are also  the reference parameters used in the  Monte Carlo simulations of these systems by
Bates et al. \cite{BATES} (rigid rod fluids) and Dalhaimer et al.
 \cite{dalhaimer2007crosslinked} (crosslinked rigid rod networks).
These works together with the studies of lyotropic liquid crystals by Kuzuu and Doi \cite{kuzuu-doi1983} motivate the constitutive assumptions in our continuum mechanics treatment.  We assume that  the  molecular interactions responsible for nematic phases in rod-like fluids compete with the solid-like elastic forces due to network crosslinking. We formulate this assumption in terms of the relative energy scales of fluid and solid systems. 
Our goal is to adopt the simplest possible set of assumptions capable of explaining the phase transition behavior.

 In  \cite{dalhaimer2007crosslinked}, the authors argued that  the actin network can be classified
into three regimes according to values of  the ratio $\chi=L_a/L_x^0$, 
where $L_a$ is the length of the actin fiber,
and $L_x^0$ is the typical length of the cross-linker in the reference configuration,  as shown
in Figure \ref{fig:actin-3cases}. 
They found that,
\begin{itemize}
 \item when $\chi<\chi_l$, the network is isotropic in the stress-free state
(external force $\Sigma=0$)
 or under expansion ($\Sigma>0$), and the network will become nematic
 under large enough compression ($\Sigma \leq \Sigma_c < 0$);
 \item when $\chi_l<\chi<\chi_r$, the network is nematic in the stress-free
state (external force $\Sigma=0$),
 or under compression ($\Sigma<0$), and the network will become isotropic
 under large enough expansion ($\Sigma \geq \Sigma_c > 0$);
 \item when $\chi>\chi_r$, the network is nematic regardless 
 the type of applied force.
\end{itemize}
%-------------------------- Figure --------------------------------
 \begin{figure}[htbp]
 \centering
  \includegraphics[scale=0.26]{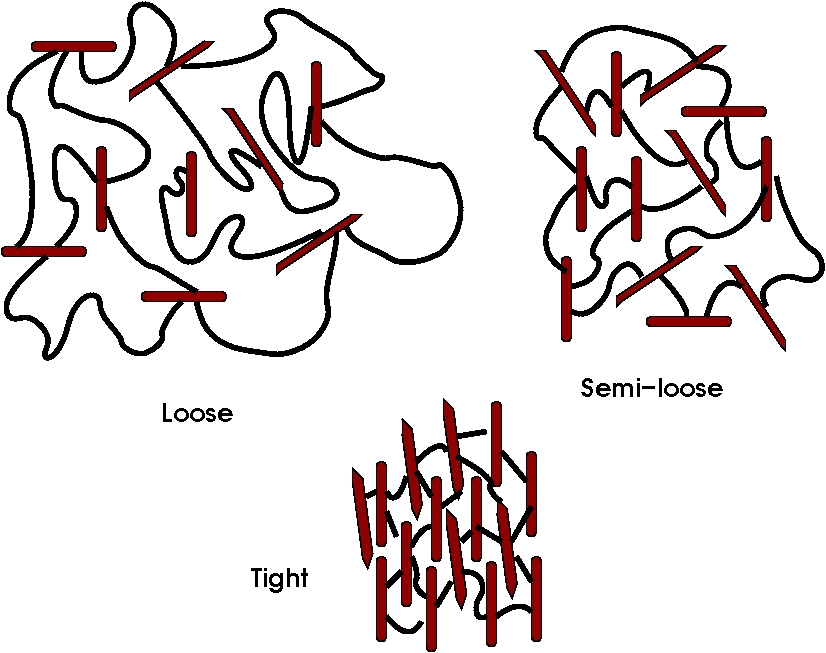}  
 \caption{Three types of actin network according to $\chi$.  Reproduced from
\cite{dalhaimer2007crosslinked}.}  \label{fig:actin-3cases}
 \end{figure}
%----------------- END FIGURE ---------------------------------------
\hspace{0.1cm}

\subsection{Parameters of the model and assumptions}
We assume that the system is characterized by 
\begin{enumerate}
\item 
The reference configuration $\Omega\subset{\mathbf R}^3$ and  the previously defined positive quantities  $L_x^0$, $ \rho_0$, $\sigma_x^0$, $\chi$ and $A_a$.
\item The energy scaling parameters
\begin{equation}
\mu=RT\sigma_x^0, \quad \nu= RT A_a\rho_0, \label{parameters}
\end{equation}
where $R$ is  the gas constant and $T$ the absolute temperature. 
In particular, they reflect the property that an increase in the  rod aspect ratio, while holding the other parameters fixed,  tends to favor nematic equilibrium. 
\end{enumerate}
\begin{remark}
 Since the rods are  not randomly located in space  as in the case of a fluid 
but serve as crosslink sites,  $\rho_0$ and $L_x^0$ are not independent. For
systems such that $L_x^0>> L_a$, the following estimate  holds:
\begin{equation}
\rho_0=\frac{{\textrm{total volume of rods}}}{\textrm{total undeformed volume}}=
% K \frac{\sigma_x L_aD_a^2}{\sigma_x L_x^3}
  K\frac{\chi^3}{\mathcal A_a^2}.
\label{rho0-loose}
\end{equation}
We have taken the material of the rod as having mass density 1.  $K$ is a
network constant that accounts for the number of crosslinks per actin unit and
the coordination number of the network. Moreover, in estimating the denominator,
we have assumed that the total volume of the system is fully spanned by the
network. %with the total volume of the rods being negligible. 
\end{remark}

%-------------------------------- subsubsection --------------------------------
\subsubsection{Nematic rod fluid}
We assume that $f:(-\frac{1}{2}, 1)\times(0,\infty) \longrightarrow \mathbf R$ represents a uniaxial bulk energy, parametrized by $\chi>0$, so that:
\begin{enumerate}
\item There exists a critical value  $\chi_t$, such that, for $0<\chi<\chi_t$, $f$ has two local minima 
$\{s=0, \rho=\rho_0\}$ and $\{s^*>0, \rho^*> \rho_0\}$. For $\chi>\chi_t$,  only the nematic minimum remains.
% and satisfies $0<\rho^*<\rho_0$.
\item There exists a critical value $\chi_l<\chi_t$ such that,  
\begin{eqnarray}
 && f(0, \rho_0; \chi)<f(s^*, \rho^*; \chi), \quad {\textrm{ for}} \,\, 0<\chi<\chi_l,\\
&& f(0, \rho_0; \chi)>f(s^*, \rho^*; \chi), \quad {\textrm {for}} \,\, \chi_l<\chi<\chi_t,\\
&&f(0, \rho_0; \chi_l)=
f(s^*, \rho^*; \chi_l). \end{eqnarray} 
\item   $f(s^*, \rho^*; \chi)$ decreases with
increasing $\chi$,   and $s^* $ increases and $\rho^*$  decreases, also with respect to $\chi$. 
\item  $f$ has a maximum at $s=s^{**},
\rho=\rho^{**}$,  $0<s^{**}<s^*, $ $\rho_0<\rho^{**}<\rho^{*}$.
% Moreover, $s^*(\cdot)$ is increasing and $\rho^*(\cdot)$ is decreasing.  
\item  It satisfies growth conditions with respect to $s$ and $\rho$:
\begin{eqnarray}
&& \lim_{s\to\{-\frac{1}{2}, 1\}} f(s, \rho; \chi)= +\infty,\, \textrm{for
all}\,\, \rho >0,\\
&&\lim_{\rho\to\{0,\infty\}}f(s, \rho; \chi) =+\infty, \, \textrm{ for all} \, s
\in(-\frac{1}{2}, 1).
\end{eqnarray}\end{enumerate}
In the next section, we provide a method of construction a function $f$  satisfying these properties. 

\begin{figure}[!htp]
 \centering
 \scalebox{.25}{
 \includegraphics{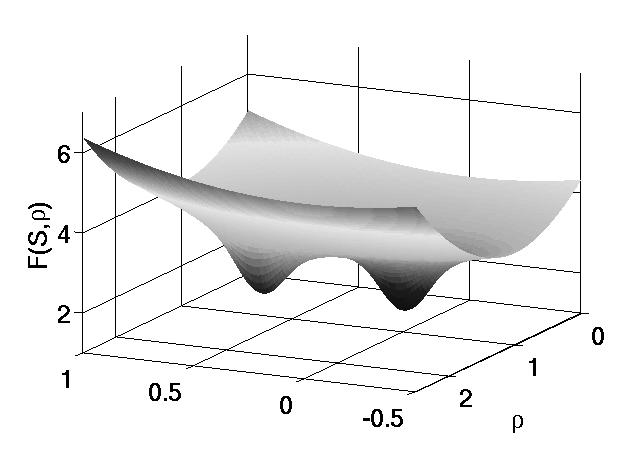}\quad \quad \quad \quad 
 \includegraphics{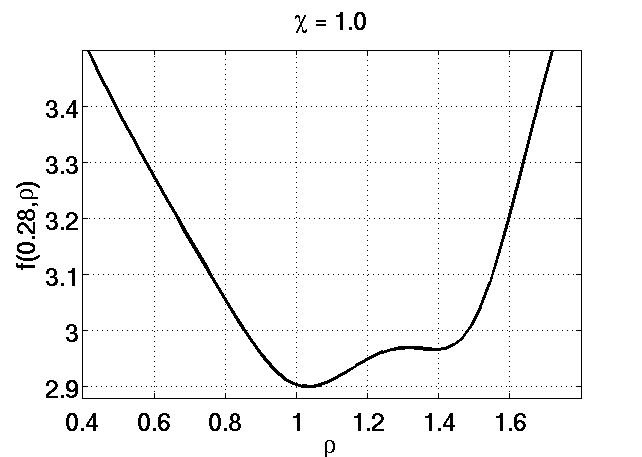}}
 \caption{3D plot of the bulk free energy (\emph{left}) and its corresponding cross section at $s = 0.28$ (\emph{right}) in the case $\chi = 1.0$.}  \label{fig:chi13D}
\end{figure}

%--------------- END subsubsection -------------------------
%=================== Subsection ===========================
\subsection{Density dependent phase transitions}
Let us consider deformations with gradient
\begin{equation}
 F= \diag(\lambda, \lambda, \lambda), \quad \rho \lambda^3=\rho_0. \label{expansion}
\end{equation}
Set $\bn_0 = 0$,  choose $W=\wbtw$ as in (\ref{btw})  and calculate the total energy density
\begin{equation}
 \mathcal E:=  \lambda^3\big(\mu(1-\alpha s^2) + \nu f(s,\rho;\chi)\big).\label{energy-expansion}
\end{equation}
We consider $\mathcal E$ parametrized by $\rho$ and calculate the critical points
\begin{equation}
 \frac{d\mathcal E}{ds}= \lambda^3\big(-2\alpha\mu s+ \nu f_s(s, \rho; \chi)\big)=0.\label{critical-points}
\end{equation}
We now discuss   the solvability of  the critical point equation as the parameter $\rho$ varies.  In the case of multiple solutions, we choose that with the lowest energy. We summarize the results as follows.
\begin{proposition} Let $\rho_0>0$ be prescribed.  Then the homogeneous minimizers of the energy have the following properties:
\begin{enumerate}
 \item For $\chi\geq\chi_t$, the minimizer $s=s(\rho, A_a, \chi)>0$ for all $\rho>0$ with $s(\rho,A_a,\cdot)$ increasing and such that $s(\rho,A_a, \chi)\to 1$ as $\chi\to\infty$. 
\item For $0\leq\chi\leq \chi_t$, there exists a   function $\rho=R(A_a, \chi)$, decreasing as  $A_a$ increases, with $\chi$ held fixed, and increasing as $\chi$ decreases, with 
$A_a$ held fixed, and 
 such that the minimizers satisfy
\begin{eqnarray}
&& s(\rho, A_a,\chi)>0, \,\, {\textrm{for}} \,\, \rho\geq R(A_a, \chi),\\
&& s(\rho,A_a,\chi)\approx 0, \,\, {\textrm{for}} \,\, \rho< R(A_a, \chi).
\end{eqnarray}
Moreover, $R(A_a, \chi)\to \infty$ as $\chi\to 0$. 
Furthermore,  $s(\cdot, A_a, \chi)$ may be discontinuous across $R(\cdot)$. \end{enumerate}
\end{proposition}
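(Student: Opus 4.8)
The plan is to reduce the statement to a one–parameter analysis of the scalar function obtained by freezing $\rho$ in (\ref{energy-expansion}), and then to read off the behaviour of its minimizers from the postulated shape of $f(\cdot,\cdot;\chi)$. First I would observe that, for the deformations (\ref{expansion}), the constraint $\rho\lambda^3=\rho_0$ makes the prefactor $\lambda^3=\rho_0/\rho$ a fixed positive number once $\rho$ is prescribed, so minimizing $\mathcal E$ over $s$ is equivalent to minimizing
\[
 h_{\rho,\chi}(s):=\mu\,(1-\alpha s^2)+\nu\,f(s,\rho;\chi),\qquad s\in(-\tfrac12,1),
\]
with $\mu=RT\sigma_x^0$ and $\nu=RTA_a\rho_0$ as in (\ref{parameters}). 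By the growth assumption $f(s,\rho;\chi)\to+\infty$ as $s\to\{-\tfrac12,1\}$ and boundedness of the quadratic elastic term, $h_{\rho,\chi}\to+\infty$ at both endpoints; hence a global minimizer exists in the open interval and is an interior critical point, i.e.\ a solution of (\ref{critical-points}), which I rewrite as $f_s(s,\rho;\chi)=m(A_a)\,s$ with $m(A_a):=2\alpha\sigma_x^0/(A_a\rho_0)>0$ \emph{decreasing} in $A_a$. Thus everything reduces to tracking, as $(\rho,A_a,\chi)$ vary, which intersection of the graph of $s\mapsto f_s(s,\rho;\chi)$ with the line $s\mapsto m(A_a)\,s$ realizes $\min h_{\rho,\chi}$, and to comparing the energies at the competing critical points.

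For part~1 ($\chi\ge\chi_t$) I would invoke the first of the postulated properties of $f$: the isotropic well has merged away, so $f(\cdot,\rho;\chi)$ is single–welled with minimum at a positive value of $s$. A sign inspection of $h_{\rho,\chi}'=\nu\,(f_s(\cdot,\rho;\chi)-m(A_a)\,\cdot)$ — negative near $s=-\tfrac12$, and, since the single well of $f$ and the concave elastic term both push the first interior minimum to positive $s$, vanishing next at a point $s>0$ — shows that the global minimizer $s(\rho,A_a,\chi)$ is positive for every $\rho>0$. Monotonicity in $\chi$ follows by implicit differentiation of $f_s(s,\rho;\chi)=m(A_a)\,s$: the postulate that the nematic well deepens and its location $s^*$ increases with $\chi$ gives $\partial_\chi f_s<0$ near the well, while $\tfrac{d}{ds}\big(f_s(s,\rho;\chi)-m(A_a)s\big)>0$ at a strict minimum of $h_{\rho,\chi}$; dividing yields $\partial_\chi s>0$. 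The limit $s(\rho,A_a,\chi)\to1$ as $\chi\to\infty$ I would obtain from the explicit construction of $f$ in the next subsection, where the nematic contribution dominates and drives the well toward the barrier at $s=1$.

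For part~2 ($0\le\chi\le\chi_t$) the restricted function $f(\cdot,\rho;\chi)$ carries two wells, an isotropic branch $s_I(\rho)\approx0$ (the critical point near $0$; it need not equal $0$ when $\rho\neq\rho_0$, because of the linear shift produced by the elastic term) and a nematic branch $s_N(\rho)>0$, separated by the local maximum near $s^{**}$. I would compare the branch energies $E_I(\rho):=h_{\rho,\chi}(s_I(\rho))$ and $E_N(\rho):=h_{\rho,\chi}(s_N(\rho))$, the global minimizer being the branch of lower energy: using the postulates on the relative depths of the wells, $E_I<E_N$ for $\rho$ at or below $\rho_0$, $E_N<E_I$ for $\rho$ near $\rho^*$ and above, and both depend continuously (and, I will argue, monotonically) on $\rho$, so $E_I-E_N$ changes sign exactly once, at a value $\rho=R(A_a,\chi)$. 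Consequently $s(\rho,A_a,\chi)=s_N(\rho)>0$ for $\rho\ge R$ and $s(\rho,A_a,\chi)=s_I(\rho)\approx0$ for $\rho<R$, with a jump from $\approx 0$ to $s_N(R)>0$ at $\rho=R$: that jump is precisely the possible discontinuity asserted. For the parameter dependence, increasing $A_a$ enlarges the weight $\nu$ of $f$ and, by the postulate that larger aspect ratio strengthens the nematic contribution (cf.\ (\ref{parameters})), makes $E_N-E_I$ decrease pointwise, so $R(A_a,\chi)$ decreases; decreasing $\chi$ shallows the nematic well relative to the isotropic one (the postulated $\chi$–dependence of the well depths, with the crossover $\chi_l$), so a larger density is needed to tip the balance and $R$ increases as $\chi$ decreases; and since as $\chi\to0$ the nematic well becomes arbitrarily shallow while $s^*$ shrinks, no finite density can make the nematic branch prevail, i.e.\ $R(A_a,\chi)\to\infty$.

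The step I expect to be the real obstacle is part~2: the uniqueness of the coexistence density $R(A_a,\chi)$ and the precise monotonicity of $R$ in $A_a$ and $\chi$. The listed hypotheses on $f$ control the two wells only at their natural densities $\rho_0$ and $\rho^*$, not the full $\rho$–dependence of $E_I-E_N$, so this either requires an additional monotonicity postulate on $f(\cdot,\cdot;\chi)$ or must be extracted from the $f$ constructed explicitly in the next subsection. The remaining ingredients — the reduction to $h_{\rho,\chi}$, the Weierstrass existence argument, and the graphical/implicit–function bookkeeping for the scalar equation (\ref{critical-points}) — are routine.
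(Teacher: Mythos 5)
Your proposal follows essentially the same route as the paper, which in fact offers no proof at all: the proposition is stated as a ``summary'' of the two preceding sentences (solve the critical point equation (\ref{critical-points}) as $\rho$ varies and, when there are multiple solutions, select the one of lowest energy), and your reduction to the scalar function $h_{\rho,\chi}$, the Weierstrass argument via the endpoint blow-up of $f$, and the branch-energy comparison $E_I$ versus $E_N$ are precisely that recipe made explicit. The obstacles you flag are genuine and are not resolved by the paper either: the hypotheses of subsection 4.1.1 constrain the two wells only at $\rho_0$ and $\rho^*$, so the single sign change of $E_I-E_N$ defining $R(A_a,\chi)$, its monotonicity in $A_a$ and $\chi$, and the limit $s\to 1$ as $\chi\to\infty$ all require either additional monotonicity postulates on $f$ or the explicit construction (\ref{eqn:f})--(\ref{f-h}).
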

Comparing the liquid crystal behavior of the system under expansion with the next simulations on extension    provides additional information on network effects.
%---------------------------- subsubsection -----------------------------------
\subsubsection{Isotropic-Nematic Phase Transitions}
We now carry out numerical simulations to describe the phase transition behavior  under plane strain deformation given  by 
\begin{eqnarray}
&&F=\diag(\lambda, \lambda, 1), \, \, \, \lambda^2\rho=\rho_0.\label{stretch-biaxial}
\end{eqnarray}

We present three types of plots: the phase diagrams  (\ref{fig:chi1}) and (\ref{fig:chi9-001}) in the $(\rho, A_a)$-plane, the graphs  (\ref{fig:lubinski-like}) and (\ref{fig:order_param_comparison}) of the  equilibrium order parameter $s$ in terms of the  extension  ratio $\lambda$,  and the  stress-strain diagrams (\ref{fig:stress_strain_comparison}).

 The phase diagrams are obtained by solving the equation of critical points, that is,  the analog of (\ref{critical-points}) and, in the case of multiple solutions,  plotting that with smallest energy.  Specifically,   let us define, 
the \emph{isotropic} $\mathcal{E}_{iso} = \mathcal{E}(s=0,\rho)$ and the \emph{nematic}  
$\mathcal{E}_{nema} = \mathcal{E}(s\ne0,\rho)$ energies, respectively. 
The construction of the phase diagrams is summarized as follows:
\begin{enumerate}
 \item Construct the bulk energy function $f(s,\rho; \chi)$ for the problem. % (Notice that $\det{F} =\frac{1}{\rho}$).
 \item Define a domain $\mathcal B = [\rho_1,\rho_2]\times[A_{a1},A_{a2}]$ in the \emph{density}-\emph{aspect-ratio} space. 
 \item Choose a discrete subset $\mathcal B_h \subset \mathcal B$ such that,
  \begin{eqnarray*}
   \mathcal B_h  &=& \{ (\rho_{1}+ih_1,A_{a1}+jh_2) \ | \ 0 < h_1 < (\rho_2-\rho_1), 0 < h_2 < (A_{a2}-A_{a1}) \\
   && \& \ i,j \in \mathbb{N}\}.
  \end{eqnarray*}
  \item Given a point $(\rho_i, A_{ai}) \in \mathcal B_h$ compute $s$  by solving the equilibrium equation $\frac{d\mathcal E}{ds}=0$.% (this is as in  (\ref{critical-points}) but with the $\lambda^2$ factor instead of the cubic). 
  \item The point $(\rho_i, A_{ai})$ % in the \emph{density}-\emph{aspect-ratio} space 
is labelled  \emph{isotropic}  if  $ \mathcal{E}_{iso}< \mathcal{E}_{nema} $  and \emph{nematic} otherwise.
%  \item Check  whether the the point admits a \emph{nematic} ($ s \in \left( -\frac{1}{2}, 1\right)\text{\textbackslash}\{0\}$) or 
%  \emph{isotropic} solution ($s = 0$). 
%    In the  case  of multiple solutions,  the state is selected by $\min \{ \mathcal{E}_{iso}, \mathcal{E}_{nema}\}$.
   \item Finally,  we plot the nematic and isotropic points in a $(\rho, A_a)$-diagram. We follow the   convention of assigning  {\it red }  to nematic points,  and  {\it blue} to  isotropic ones. 
%In our results (see figures \ref{fig:chi1}  and \ref{fig:chi9-001}), 
\end{enumerate}
We construct $f$ as follows. Let
  $z:=\det F$, and define
\begin{eqnarray}
\nonumber
h(s,z;\chi,s_{i,n},z_{i,n},\eta_{i,n}) &=& \chi W_{iso}(s,z;s_{i},z_{i},\eta_{i}) +W_{nema}(s,z;s_{n},z_{n},\eta_{n}) \\ \label{eqn:f}
     &&+W_{gr}(s,z),  \\ \nonumber
 W_{iso}(s,z;s_{i},z_{i},\eta_{i}) &=& \arctan{\left(\eta_{i}( (s-s_i)^2+ (z-z_i)^2)\right)} +(s-s_i)^2\\ \label{eqn:f-isotropic}
                            &&+ (z-z_i)^2, \\ \label{eqn:f-nematic}
 W_{nema}(s,z;s_{n},z_{n},\eta_{n}) &=& \arctan{\left(\eta_{n}( (s-s_n)^2+ (z-z_n)^2\right))}, \\ \label{eqn:f-growth}
 W_{gr}(s,z) &=& -(\log(z)+\log(|s-1|(s+0.5))  +z^2. 
\end{eqnarray}
The parameters $s_{i,n},z_{i,n}$ represent 
the position of the \emph{isotropic} and \emph{nematic} minimum,  respectively, and $\eta_{i,n}$  represent the width of  the corresponding well. 
 For a fixed set of parameters  $\{s_{i,n},z_{i,n}, \eta_{i,n}\}$,  let
 \begin{equation} f(s, \rho; \chi)= h(s,z;\chi,s_{i,n},z_{i,n},\eta_{i,n}). \label{f-h} \end{equation}

Figures \ref{fig:chi1}
 and \ref{fig:chi9-001} show phase  diagrams for different values of $\chi$  and  contour plots of $f(s,\rho; \chi)$.  A main feature of these diagrams is that 
 the density at which the nematic phase occurs increases with either  lowering $\chi$ or $A_a$. (We hold  $A_a$ fixed, in the first case, and $\chi$ in the latter).  Moreover, in these diagrams, the  isotropic phase is always present.  It would require values  $\chi>>10^3$ to encounter the  nematic phase only.  We stipulate that imposing a steeper growth of the energy with respect to $\rho$, for $\rho$ large, would also yield purely nematic phase diagrams for $\chi=O(10^3)$. 
   
 % fixed.% The growth term  (equation \ref{eqn:f-growth}) is based on the standard form of the so called \emph{Flory-Huggins} energy widely used in polymer physics.
 
%----------------- Figure -----------------------------
\begin{figure}[!htp]
 \centering
 \includegraphics[scale=0.27]{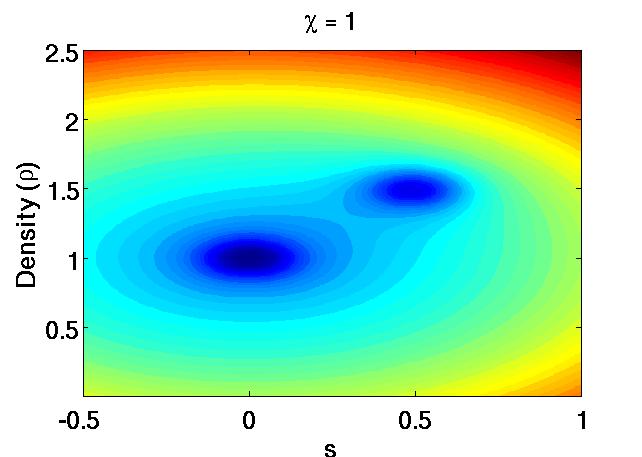} 
 \includegraphics[scale=0.18]{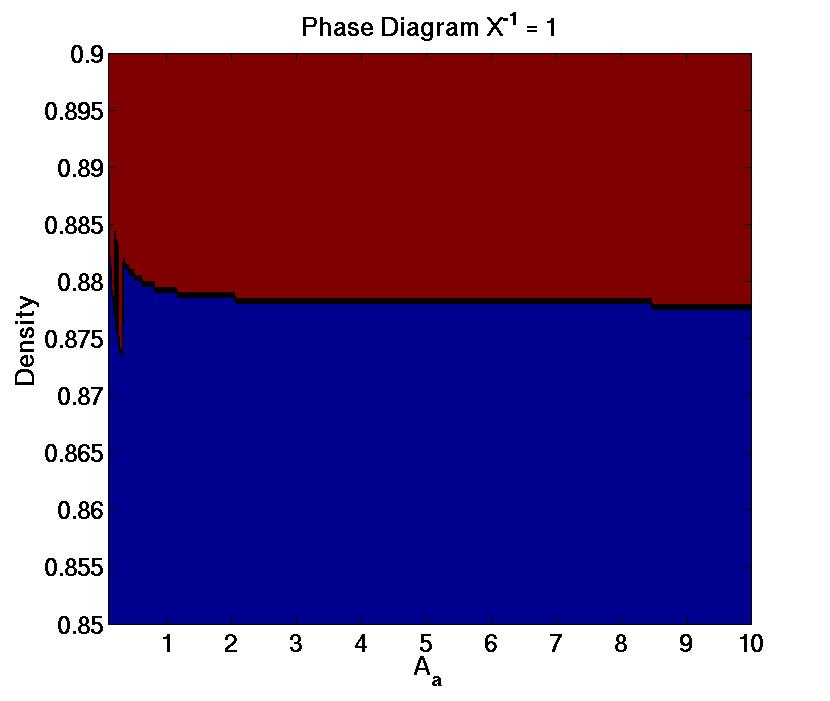}
 \caption{Contour lines for the bulk potential function (\emph{left}) and phase space diagram (\emph{right}) for $\chi = 1$.} 
 \label{fig:chi1}
\end{figure}
%----------------- END FIGURE ------------------------------

%---------------- Figure ------------------------------- 
\begin{figure}[!htp]
\centering
%\scalebox{.19}{
\includegraphics[scale=0.25]{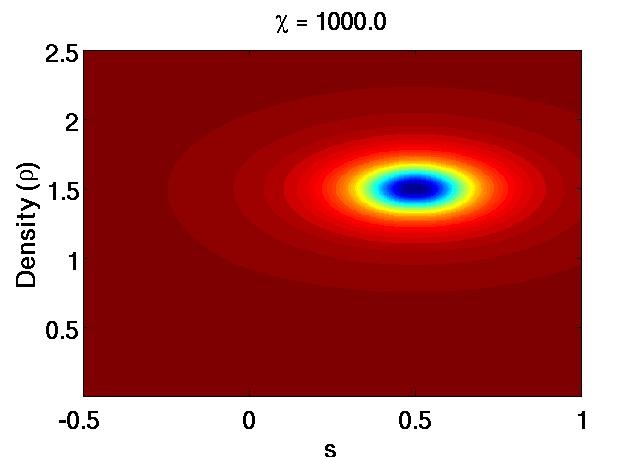}%\quad \quad \quad \quad
\includegraphics[scale=0.25]{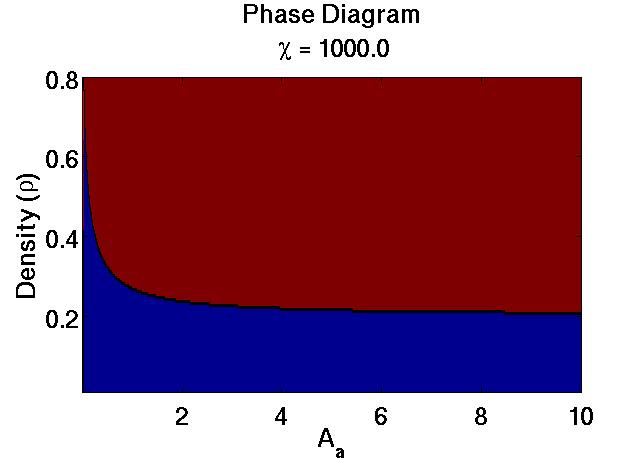}%\quad \quad 
% \includegraphics{figures/phase_diag_x9} 
%}
 \caption{Contour lines for the bulk potential function (\emph{left}) and phase space diagram (\emph{right}) $\chi = 1000.0$.}  \label{fig:chi9-001}
\end{figure}
%---------------- END FIGURE -----------------------------

%------------------------- END subsubsection ------------------------
%----------------------- subsubsection ----------------------------
\subsubsection {Order Parameter Diagrams and Stress-Strain Plots} 
%We focus our attention to the study of the relation between the order parameter and the density. Also, to provide a complete 
%description of the physical system, we include strain-stress plots and show the effect of the transition in these curves.

Figures  \ref{fig:order_param_comparison0} and \ref{fig:order_param_comparison} represent  plots of the uniaxial order parameter $s$ with respect to the rod-density $\rho$, for $\chi= 0.5, 3.5, 10 $ and $ 80$,  and for values of the aspect ratio $A_a$ ranging from $0.01$ to $80$. These values represent a range of shapes, from  oblate  cylinders to very elongated rods. The first  graph in figure 4.5 presents  two density-intervals with distinguished behavior,  one corresponding to well aligned rods at high density, with a drop in the uniaxial order parameter as the density decreases to a critical value, and a second interval of  further decrease in $s$ as $\rho$ continues decreasing.   These graphs are in full agreement with those obtained by Montecarlo simulations in \cite{BATES} and \cite{dalhaimer2007crosslinked}.
Moreover, the second graph of figure 4.5 and those in 4.6 present a third density interval  of increase of  the order parameter. This is due to the rod alignment that results from larger extension ratios (i.e.,  smaller densities), and it is a consequence of the elastic network connections of the rods.  Proposition (4.1) shows that this behavior is not analytically predicted when subjecting the material to uniform expansion. It is not reported either in \cite{dalhaimer2007crosslinked}.  Another feature that emerges when comparing the two graphs on the right hand sides of figures  4.5 and 4.6 is that,  for  larger $\chi$,  it requires to reach a lower density to increase the rod alignment. This may indicate the additional  difficulty in aligning larger rods, in comparison with smaller ones. 

We point out that  the first graph in  figure \ref{fig:order_param_comparison} shows the existence of oblate phases of rods with small aspect ratio, in the order of $10^{-2}$. This behavior is presented by cytoskeletal networks of red blood 
cells \cite{RBC2012,CAO2011,CELLS:ACTIN,RBC1990}.  
%Notice that, even for negative order parameter,  the curve $(\rho-s)$ also shows three  regions of alignment behavior. %This fact is in agreement with our observations in section 4 and it shows the flexibility of our approach. 
%From a different point of view, we point out that    the graph in figure 4.6 corresponding to $\chi=3.5$ presents a transition through the isotropic state at $\rho=1$. 

%----------------- Figure -----------------------------
\begin{figure}[!htp]
 \centering
\scalebox{.23}{
\includegraphics{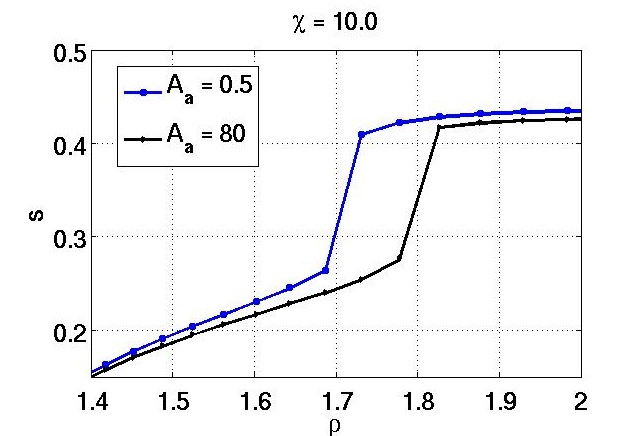}\quad \quad 
 \includegraphics{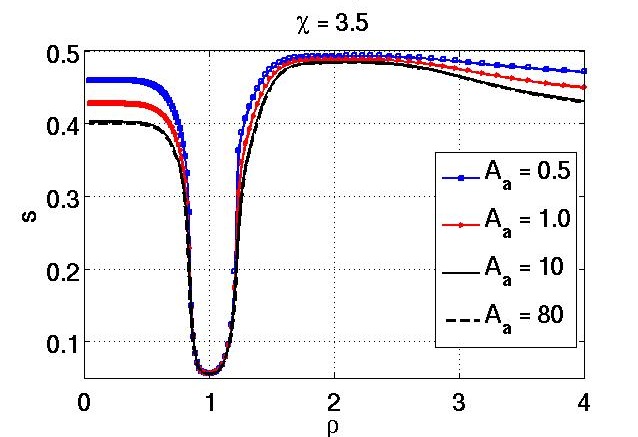} }
 \caption{Order parameter Vs density for $\chi = 10$ (\emph{left}) and $\chi = 3.5$ (\emph{right}).  Nematic well: $s=0.5$, $\rho=1.5$.}  \label{fig:order_param_comparison0}
\end{figure}
%--------------- END FIGURE -----------------------------

%----------------- Figure -----------------------------
\begin{figure}[!htp]
 \centering
\scalebox{.26}{
 \includegraphics{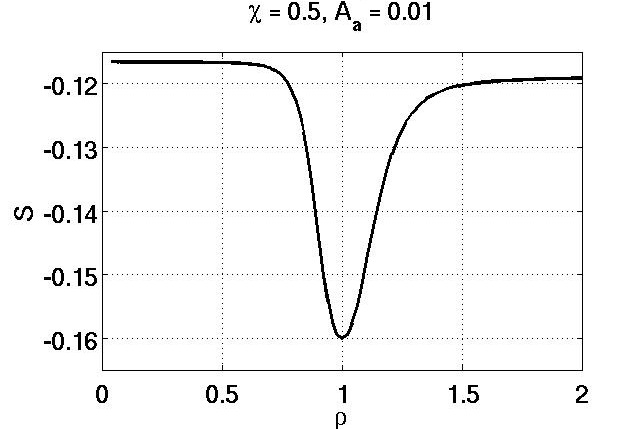}
 \includegraphics{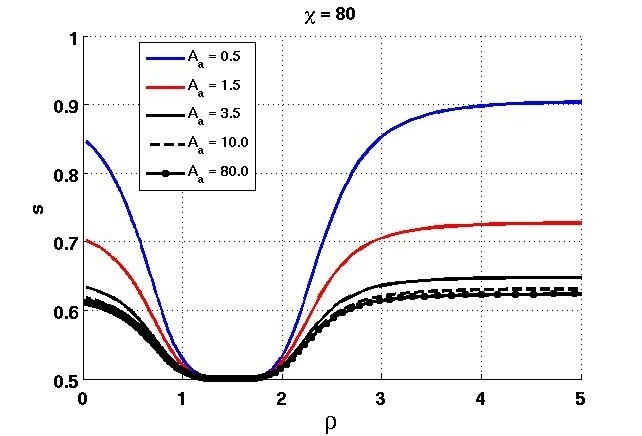} }
 \caption{Order parameter Vs density for $\chi = 0.5$ (\emph{left}) and $\chi = 80$ (\emph{right}). Nematic well: $s=0.5$, $\rho=0.5$.}  \label{fig:order_param_comparison}
\end{figure}
%--------------- END FIGURE -----------------------------

We conclude  this section discussing  the  stress-strain Figure \ref{fig:stress_strain_comparison}.  For $\chi=0.5$ and for  small and medium values of the rod aspect ratio,  the stress-strain curves are monotonic and present a {\it soft} region followed by a steeper growth. However,  we find that  for $A_a=80$,  the stress-strain curves are non-monotonic.  
%Moreover, the region of monotonicity corresponds to the density interval where $s=0$. 
The change of monotonicity occurs precisely where the order parameter experiences a sharp increase or decrease, indicating the change of volume accompanying  rod order rearrangement. However, we also found   shallower non-monotonic profiles, including for systems experiencing the nematic-isotropic phase transition,  for aspect ratios  smaller  than 80.  This seems to indicate that realignment of rods with  large aspect ratio affects change of volume  in a more significant way than for the smaller counterparts. 

 elastic part.

%-------------------- FIGURE --------------------------
\begin{figure}[!htp]
\centering
\scalebox{.26}{
\includegraphics{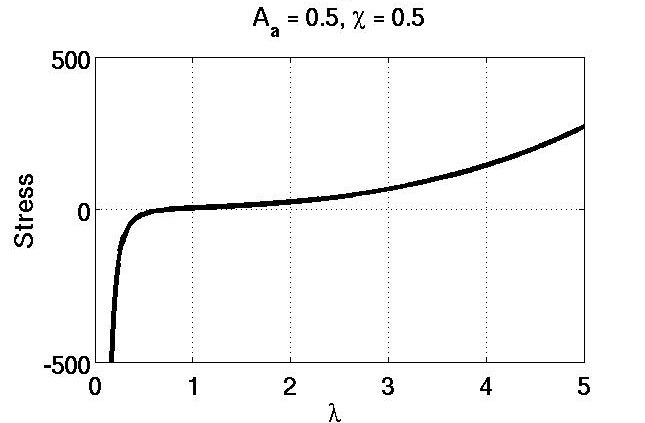}\quad \quad 
 \includegraphics{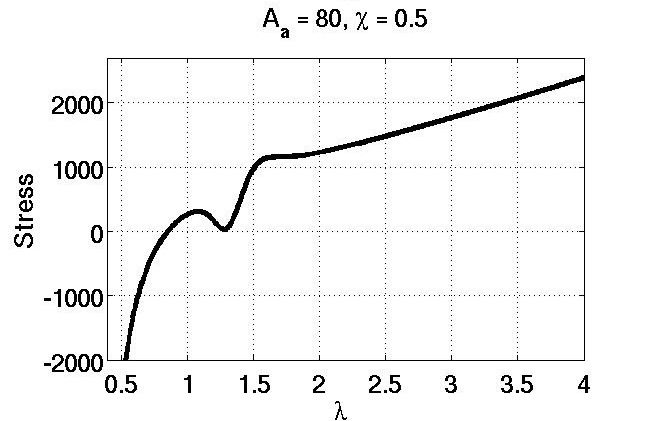} }
 \caption{Plots of the $xx$-components of the first Piola-Kirchhoff stress tensor. } %plot for $\chi = 0.5$, $A_a = 0.5$ (\emph{left}) and $A_a = 80$ (\emph{right}).}  \label{fig:stress_strain_comparison}
\end{figure}
%---------------------- END FIGURE -------------------------

%----------------------- END subsubsection ------------------------------
%***************************************** Section ****************************************************888
\section{Conclusions}
We have presented and analyzed models of anisotropic elasticity based on the theory of liquid crystal elastomers, and applied  them to  modeling order phase transitions in actin  networks.
 
%The motivation is the study of elastic networks with the anisotropy imprinted by rigid rod units. 
We followed a strategy to show existence of minimizers based on the  theory of isotropic nonlinear 
elasticity. This required  assuming that the energy density function is polyconvex  with respect to the anisotropic deformation tensor 
$G=L^{-\frac{1}{2}}\nabla\bvphi L_0^{\frac{1}{2}}$. The latter is at the core  of the works on liquid crystal elastomers by Warner and Terentjev.  
 An essential ingredient in the 
analysis is the assumption of a constitutive equation relating the shape of the polymer represented by the tensor $L$ with the order tensor $Q$ 
of the liquid crystal  rigid units. The linear constitutive relation involves the restriction that both tensors become singular 
in the same region of the order parameter space. 
%an essential requirement of the analysis.    
However, the linear relation  implicitly involves the constraint of the trace of $L$ being constant, and therefore, it restricts 
the value of the sum of the principle axis of the ellipsoid associated with $L$. In future works, we will explore how to avoid this 
restriction by assuming nonlinear relations between the two tensors. 

The assumptions on the bulk uniaxial free energy density function $f$,  as well as the algorithm  to  generate specific forms of it,  follow earlier works on uniaxial lyotropic liquid crystals. They also 
incorporate physically meaningful  growth conditions  required in the analysis.  The results on phase transitions that we  obtained from the proposed continuum theory  show  good  agreement with  those stemming from the molecular simulations that motivated this work, and from experimental results.  In forthcoming work, we aim at constructing 
bulk free energy functions based on the Onsager rigid-rod theory. Although there is limited information on   temperature dependence of the Landau-de Gennes energy for thermotropic nematic liquid crystals (\ref{bulk-thermo}),  the dependence on concentration in the lyotropic case seems to be lacking. In future work, we will also address the behavior of the system under shearing and consider the 
case of periodic crosslinking, also found in some actin networks.

 \section{Acknowledgment} This research was partially supported by a grant from the National Science Foundation, NSF-DMS 0909165.
 
%******************* BIBLIOGRAPHY ****************************
\bibliography{arxiv}
\bibliographystyle{plain}
\end{document}